\newtheorem{lemma}{Lemma}
\newtheorem{theorem}{Theorem}
\newtheorem{remark}{Remark}
\newtheorem{proposition}{Proposition}
\numberwithin{equation}{section}
 \numberwithin{Lem}{section}
 \numberwithin{Defi}{section}
 \numberwithin{Theo}{section}
 \numberwithin{Rem}{section}
  \numberwithin{Coro}{section}
  \numberwithin{Fig}{section}
\def\NN{\hbox{\rlap{I}\kern.16em N}}
\def\NC{\hbox{\rlap{\kern.24em\raise.1ex\hbox
                  {\vrule height1.3ex width.9pt}}C}}
\title{ Unconditionally optimal convergence of an energy-conserving and linearly implicit scheme for nonlinear wave equations\thanks{This work is supported in part by the NSFC
(Grant Nos. 11771128, 11871106, 11871092, 11926356) and NSAF (Grant No. U1930402).}}
\author{ Waixiang Cao\thanks{ School of  Mathematical Sciences, Beijing  Normal University, Beijing 100875, China ({\tt caowx@bnu.edu.cn})}
\and Dongfang Li\thanks{School of Mathematics and Statistics,
Huazhong University of Science and Technology, Wuhan 430074, China
({\tt dfli@hust.edu.cn}); and Hubei Key Laboratory of Engineering Modeling and Scientific Computing, Huazhong University of Science and Technology, Wuhan 430074, China}
\and Zhimin Zhang\thanks{Beijing Computational Science Research Center, Beijing 100193, China ({\tt zmzhang@csrc.ac.cn}); and
Department of Mathematics, Wayne State University, Detroit, MI 48202, USA (\tt zzhang@math.wayne.edu)}}
\date{}
\begin{document}

\maketitle

\begin{abstract}
{In this paper, we present and analyze an energy-conserving and linearly implicit scheme for solving the nonlinear wave equations. Optimal error estimates
in time and superconvergent error estimates in space are  established without time-step dependent on the spatial mesh size.
The key is to estimate directly the solution bounds in the $H^2$-norm for
both the nonlinear wave equation and the corresponding fully discrete scheme, while the previous investigations rely on the temporal-spatial error splitting approach. Numerical examples are presented to confirm energy-conserving properties, unconditional convergence, and optimal error estimates, respectively, of the proposed fully discrete schemes.}

   \vskip 5pt \noindent {\bf Keywords:} { scalar auxiliary variable, wave equations, stability, error estimate, superconvergence }

   \vskip 5pt \noindent {\bf AMS subject classifications:} { 65M60,65M12, 65N12  }
\end{abstract}

\section{Introduction}
   We present an energy-conserving and linearly implicit scheme as well as the unconditionally optimal error estimates for solving  the following  wave equation
  \begin{eqnarray}\label{con_laws}
\begin{aligned}
   &u_{tt}=\triangle u-\lambda u-F'(u),\ \ &&({\bf x},t)\in \Omega \times(0,T], \\
   &u({\bf x},0)=u_0({\bf x}),\ \ u_t({\bf x},0)=u_1({\bf x}),\ \  &&{\bf x}\in \Omega
\end{aligned}
\end{eqnarray}
with the periodic boundary condition, where  $\lambda\ge 0$ is a constant, $\Omega$ is a polygonal or polyhedral domain in ${\mathbb  R}^d (d=2,3)$, $u_0$ and $u_1$ are sufficiently smooth,   and $F\in C^2(\mathbb R)$ is the nonlinear potential. For simplicity, we assume that
  $\Omega$ is a rectangular or cubic domain.
  Nonlinear wave equations are widely used to model plenty of
 complicated natural phenomena
in variety of scientific fields \cite{bis09,dod82,dra89,waz08}. In the past several decades, it has been one of the hot spots in the numerical analysis of different schemes for the equations \cite{bru15,cao17,gau15,wan181,wu16}.

   There are many papers that consider error analysis of the fully discrete schemes for the nonlinear problems under the following assumption (e.g., \cite{can10,car04,den77,gri05}),
  \begin{equation}\label{assm1}
    | F'(u^n)-F'(u_h^n)|\leq L|u^n-u_h^n|,
  \end{equation}
where  $u^n$ and $u_h^n$ are respectively the theoretical and numerical solutions, and $L>0$ is the Lipschitz coefficient.
A classical model satisfying \eqref{assm1} is the Sine-Gorden equation, whose nonlinear term is $\sin(u)$. However, as pointed in \cite{gau15}, the assumption \eqref{assm1} is not the typical behavior of
the general nonlinear wave equations and thus  its range of the actual applicability is limited.

In order to deal with the non-Lipschitz nonlinearity, one common way is to
 impose a priori boundedness of the numerical approximations $u_h^n$. In classical finite element analysis, the numerical solutions in the maximum norm are usually estimated  by
\begin{eqnarray} \label{tsr}
\| u^n_h\|_{L^\infty}&\leq &\| R_hu^n\|_{L^\infty}+\| R_hu^n-u^n_h\|_{L^\infty}\nonumber\\
 &\leq &\| R_hu^n\|_{L^\infty}+C h^{-d/2} \| R_h u^n - u^n_h\|_{L^2} \nonumber\\
 &\leq & \| R_hu^n\|_{L^\infty}+ Ch^{-d/2}(\tau^p+h^{r+1}),
\end{eqnarray}
where $R_h$ is the projection operator,  $r+1$ and $p$  are convergence orders in spatial and temporal directions, respectively. Consequently, a time-step restriction $\tau = O(h^{\frac{d}{2p}})$ is needed in \eqref{tsr}(e.g., \cite{can88,Farago,Garcia,Luskin,Rachford}). Such time-step restriction appears widely in the numerical analysis but is not always necessary in actual applications.

Unconditional convergence means that the established error bound is valid without the above mentioned time-step restriction. To achieve the unconditional convergence,  a temporal-spatial error splitting approach is presented recently \cite{lib14,lib13,lib131,li183,li19}. The key idea of the approach
is to introduce a time discrete system, whose solution is denoted by $U^n$. Then, one can get the following error estimates
\[ \|U^n\|_{H^2}\leq C~~~\textrm{and}~~~\|R_hU^n-u_h^n\|_{L^2}\leq Ch^2. \]
The boundedness of the numerical solutions is obtained by {
\begin{eqnarray} \label{tsr1}
\| u^n_h\|_{L^\infty}&\leq &\| R_hU^n-u^n_h\|_{L^\infty}+\| R_hU^n-{\cal I}_hU^n\|_{L^\infty}+\| {\cal I}_hU^n\|_{L^\infty}\nonumber\\
 &\leq &C h^{-d/2}( \| R_h U^n - u^n_h\|_{L^2}+\| R_h U^n - {\cal I}_h U^n\|_{L^2} )+ \| U^n\|_{L^\infty}\nonumber\\
 &\leq &Ch^{2-d/2}+\|U^n\|_{H^2}.
\end{eqnarray}
   Here ${\cal I}_hU^n$ denotes the interponation function of $U^n$.}
It implies the numerical solutions is bounded if the temporal and spatial step-sizes are sufficiently small, respectively. Then,
the error estimates can be proved following the usual way. In spite of the interesting and instructive work, additional error estimates in different norms are required in the proof, and so far, most unconditional convergence results are focused on nonlinear parabolic problems.

Nonlinear wave equations \eqref{con_laws} have several remarkable features. First,
the models \eqref{con_laws} are energy-conserving, i.e.,
\[
E(t)= \int_\Omega u_t^2+ |\nabla u|^2 +\lambda |u|^2 + 2F(u) d x =E(0).
\]
Second, the typical nonlinear terms are non-Lipschitz continuous. Third, the solutions have different regularities.
A natural question is whether we can develop some effective unconditional convergence numerical schemes for nonlinear wave equations, taking all the remarkable features into account.

In the present paper, we present an energy-conserving and linearly implicit scheme for solving the nonlinear wave equations \eqref{con_laws}. The scheme
is of order $2$ in the temporal directions and no additional initial iterations are required.
 The construction of the scheme is based on
recently-developed  scalar auxiliary variable (SAV) approach combined with  finite element methods,
classical Crank-Nicolson methods,  and extrapolation approximation. We show
that our fully-discrete schemes conserve the energy and converge without any temporal step-size restrictions dependent on the spatial step-size.
Unlike the previous temporal-spatial error splitting approach, we estimate the solution directly in the following procedure: (1) obtain the bounds in the $H^2$-norm of the solutions for
both the nonlinear wave equations and the corresponding fully discrete schemes; (2) establish the bound for numerical solutions by applying the embedding inequality; (3) obtain the unconditionally optimal error estimates
in time and superconvergent error estimates in space.

We remark that the key to construct the energy-conserving schemes is the SAV idea, which has been applied successfully  to the gradient flows \cite{akr19,shen-xu,she18,she19}.
Very recently, the idea was introduced to develop energy-conserving schemesfor the conservative laws \cite{cai19,cai-shen,lisun20,li21}. However, much attention has been paid to the stability
and energy-conserving properties, and no unconditional convergence results  of fully discrete SAV schemes for nonlinear wave equations are found in the literature. This is the main motivation and
contribution of the present study.

The rest of paper is organized as follows. In Section \ref{method1}, we propose a fully discrete
scheme for the nonlinear wave equation \eqref{con_laws}. In Section \ref{anal-1}, we present a detailed proof to show the energy-conserving properties and unconditional convergence for the temporal discretization.  Error estimates for the fully discrete solution
is established in Section \ref{error-es},  where we prove that the approximation error
 is unconditionally optimal  in time and superconvergent in space (under the $H^1$ norm).
In Section \ref{num}, we present several numerical examples to confirm the theoretical results. Finally, conclusions are presented in Section \ref{conc}.

   \section{The linearly implicit method }\label{method1}
 In this section, we present a fully discrete numerical scheme, which preserves
the discrete energy.

Suppose $E_1(u)=\int_{\Omega} F(u) d{\bf x}\ge -c_0$ for some $c_0>0$, i.e., it is bounded from below, and let
   $C_0>c_0$ so that $E_1(u)+C_0>0$.
 We introduce the following scalar  auxiliary variable (SAV)
 \[
    r(t)=\sqrt {E(u)},\ \ E(u)=\int_{\Omega} F(u) d{\bf x}+ C_0,
 \]
   and rewrite \eqref{con_laws} as
\begin{eqnarray}\label{con_laws1}
\begin{aligned}
    & u_t=v, &\\
    &v_t=\triangle u-\lambda u -\frac{r}{ \sqrt{ E(u)}}f(u), &\\
    & r_t=\frac{1}{2\sqrt{ E(u)}}\int_{\Omega} f(u)u_t d{\bf x}, &
\end{aligned}
\end{eqnarray}
  where $f(u)=F'(u)$.

  Let ${\mathcal T_h}$ be the usual regular triangulation of the polygonal domain $\Omega$. Denote by $h_{T}$ the meshsize of $\mathcal T_h$, where $h_{T}$ is the diameter of the element ${T} \in \mathcal T_h$, and $h= \max_{  T \in \mathcal T_h}$.
   Let $V_h$ be the classical finite-dimensional subspace of $H^1(\Omega)$, which consists of the usual continuous piecewise polynomials of degree $k$ ($k\geq 1$) on $\mathcal{T}_h$.  That is,
\[
   V_h=\{v\in C^0(\Omega):  v|_{T}\in {\mathbb P}_{k},\ \forall T\in {\cal T}_h\},
\]
  where  ${\mathbb P}_{k}$ denotes  the space of polynomials of degree  no more than $k$.

Let $\tau=\frac{T}{N}$ with $N$ a given integer and $t_n=n\tau$, $n=0,1,\cdots,N$. Denote
\[
    u^n=u({{\bf x}},t_n),\ \ v^n=v({{\bf x}},t_n), \ \ r^n=r(t_n).
\]
  For any sequence of the functions $\{f^n\}_{n=0}^N$, we define for all $n, n=0,\ldots N-1$
 \[
    D_{\tau}f^{n+1}:=\frac{Df^{n+1}}{\tau}=\frac {f^{n+1}- f^{n}}{\tau},\ \  \tilde f^{n+\frac 12}:=\frac 12 (3f^{n}-f^{n-1}), \ \  \hat f^{n+\frac 12}:=\frac{f^{n+1}+f^{n}}{2}.
 \]
    Note that for $n=0$,  we  denote by  $ \tilde f^{\frac 12}=f^0$.

 { To design an energy-conserving and linearly implicit numerical  scheme,  which is easy to implement and efficient,   we consider the following} fully discrete Crank-Nicolson Galerkin SAV method:  Find $u^{n+1}_h\in V_h, v^{n+1}_h\in V_h, r_h^{n+1}\in  \mathbb R$ for $n=0,\ldots N-1$ such that for all $(w_h,\zeta_h)\in V_h\times V_h$
 \begin{eqnarray}\label{sav}
 \begin{split}
    & \big(D_{\tau}u_h^{n+1}, w_h\big)=\big(\hat v_h^{n+\frac 12}, w_h\big), &&\\
    &\big(D_{\tau}v_h^{n+1}, \zeta_h\big)=-\big( \nabla \hat u_h^{n+\frac 12}, \nabla \zeta_h\big)-\big(\lambda \hat u_h^{n+\frac 12}, \zeta_h\big)
    -\big( \frac{\hat r_h^{n+\frac 12}}{\sqrt {E(\tilde u_h^{n+\frac 12})}}f(\tilde u_h^{n+\frac 12}), \zeta_h \big), &&\\
    &  {r_h^{n+1}- r_h^{n}} =\frac{1}{2\sqrt {E(\tilde u_h^{n+\frac 12})}}\int_{\Omega} f(\tilde u_h^{n+\frac 12}) ({u_h^{n+1}-u_h^n})  d{\bf x}, &&
 \end{split}
\end{eqnarray}
  where $(u,v)=\int_{\Omega} u ({\bf x}) v({\bf x}) d{\bf x}$, $f(\tilde u_h^{\frac 12})=f(u_h^0)$, and initial values are chosen as $${ (u_h^0,v_h^0,r_h^0)=(R_h u_0,R_h u_1,\sqrt{E(u_0)})}.$$
  Here $R_hu_0$ is the Ritz projection of $u_0$, which will be defined later.

      Equivalently,  we rewrite the above scheme \eqref{sav}   into the following linear form
\begin{equation}\label{eq:11}
    ((4I-\tau^2 {\triangle_h}{ +}\tau^2\lambda I)u_h^{n+1}, w_h)+\frac{\tau^2}{2} (u_h^{n+1},b_1)\big(b_1,  w_h \big)=(g, w_h)+\frac{\tau^2}{2} (u_h^{n},b_1)\big(b_1,  w_h \big)
\end{equation}
  for  all $ w_h\in V_h$,  where  {  $( \triangle_h u_h,v_h):=-(\nabla u_h,\nabla v_h)$, and }
\[
   b_1= \frac{f(\tilde u_h^{n+\frac 12})}{\sqrt {E(\tilde u_h^{n+\frac 12})}},\ \  g=(4I{ +}\tau^2{ \triangle_h}-\tau^2\lambda I)u_h^{n}+4\tau v_h^n-2\tau^2 r^n_hb_1.
\]
  Choosing $w_h$ in \eqref{eq:11} to be the basis function of $V_h$  leads to a linear equation of the form
 \[
    A{\bf u}^{n+1}+ ({\bf u}^{n+1}, {\bf b}_1){\bf b}_2={\bf g}
 \]
   for some matrix $A$ and vectors ${\bf b}_1,{\bf b}_2, {\bf g}$. By taking the inner product with ${\bf b}_1$ in the above equation,   we obtain
$({\bf u}^{n+1}, {\bf b}_1) $ and then derive  ${\bf u}^{n+1}$. Hence the scheme is  easy to implement and very efficient. We also
refer to  \cite{she18,she19} for more detailed information.



\section{Unconditionally energy preservation and convergence for the temporal discretization}\label{anal-1}

   In this section, we shall prove that the Galerkin SAV method \eqref{sav} preserves the energy
   unconditionally. Moreover, we establish the convergence analysis of the SAV approach with minimum assumptions.

  We begin with the energy preservation property of the Galerkin SAV approach.
  We define the energy
 \[
      E^n=\sqrt{\frac{1}{2} \big(\|v_h^{n}\|^2+\|\nabla u_h^{n}\|^2+\lambda\|u_h^n\|^2\big)+ (r_h^{n})^2},\ \ 1\le n\le N.
  \]
    Here $\|u\|^2=(u,u)=\|u||_{L^2}^2$.
   Taking $(w_h,\zeta_h)=(v_h^{n+1}-v_h^n, u_h^{n+1}-u_h^n)$ and multiplying the third equation  of  \eqref{sav} by ${r_h^{n+1}+r_h^n}$,
    we derive
  \begin{eqnarray*}
    \frac{1}{2} \big(\|v_h^{n+1}\|^2-\|v_h^{n}\|^2 + \|\nabla u_h^{n+1}\|^2-\|\nabla u_h^{n}\|^2 +\lambda \|u_h^{n+1}\|^2-
    \lambda\|u_h^{n}\|^2 \big)+(r_h^{n+1})^2-(r_h^{n})^2=0.
  \end{eqnarray*}
   Consequently,
 \[
      E^{n+1}=E^{n}=E^{0},\ \ \forall n\ge 1.
 \]

  Now  we  consider  a time-discrete system of equation:
 \begin{eqnarray}\label{timesystem}
 \begin{split}
    &  D_{\tau}{U^{n+1}}=\hat V^{n+\frac 12}, &\\
    & D_{\tau} {V^{n+1}}= \triangle\hat U^{n+\frac 12} -\lambda \hat U^{n+\frac 12}- \frac{\hat R^{n+\frac 12}}{\sqrt {E(\tilde U^{n+\frac 12})}}f(\tilde U^{n+\frac 12}),  &\\
    &  {R^{n+1}- R^{n}} =\frac{1}{2\sqrt {E(\tilde U^{n+\frac 12})}}\int_{\Omega} f(\tilde U^{n+\frac 12}) ({U^{n+1}-U^n})  d{\bf x}, &
 \end{split}
\end{eqnarray}
  subject to the periodic boundary condition and the following initial conditions
\begin{eqnarray*}
    U^0({\bf x})=u_0({\bf x}),\ \ V^{ 0}({\bf x})=u_1({\bf x}).
\end{eqnarray*}
   As we may observe, the numerical solution $(u_h^n,v_h^n,r_h^n)$ can be viewed as the Galerkin approximation
   of the above time-discrete system of equation.
  To study the convergence of the temporal discretization \eqref{timesystem}, we need some preliminaries.

  First, for the simplicity of notations, throughout this paper, we denote by $C$ a generic positive constant,
  which depend solely upon the physical parameters of the
problem and independent of $\tau, h, n$, and it is not necessary  to be the same at every appearance.
We  adopt the usual notations for Sobolev spaces, e.g.,
$ W^{m,p}(I)$ on sub-domain $ I\in \Omega$  equipped with the norm $\|\cdot\|_{W^{m,p},I}$
and  semi-norm $|\cdot|_{W^{m,p},I}$.
We omit the index $I$ when $I= \Omega$. Especially, when $p = 2$, we set $ W^{m,p}(I)=H^m(I)$ and
$\|\cdot\|_{W^{m,p},I}=\|\cdot\|_{H^{m},I}$ and $|\cdot|_{W^{m,p},I}=|\cdot|_{H^{m},I}.$
The notation $\alpha\lesssim \beta$ implies  that $\alpha$ is bounded by $\beta$ multiplied by a
 constant independent of $\tau, h, n$.

 Second, we would like to present a Gronwall-type inequalities, which play important role in our later convergence analysis
 and error estimates.

\begin{lemma}\label{lem1}( \cite{hey90} )
{\it Let $\tau$, $B$ and $a_k$, $b_k$, $c_k$, $\gamma_k$, for integers $k>0$, be
nonnegative numbers such that
\[  a_n+\tau\sum_{k=0}^nb_k\leq \tau \sum_{k=0}^n\gamma_ka_k+\tau\sum_{k=0}^nc_k+B,~~~~~~\mbox{for}~~n\geq 0.\]
Suppose that $\tau\gamma_k<1$, for all $k$, and set $\sigma_k=(1-\tau\gamma_k)^{-1}$. Then,
\[   a_n+\tau\sum_{k=0}^nb_k\leq \Big(\tau\sum_{k=0}^nc_k+B\Big)\exp \Big(\tau\sum_{k=0}^n\gamma_k\sigma_k \Big).\]
}
\end{lemma}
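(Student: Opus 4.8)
The plan is to prove the inequality by reducing it to a \emph{one-step linear recursion} and then unrolling it, rather than by a direct strong induction on the full history. The only structural complication is that the unknown $a_n$ appears on both sides, since the $k=n$ term of the right-hand sum contributes $\tau\gamma_n a_n$. First I would move this term to the left, writing $G_n := \tau\sum_{k=0}^n c_k + B$ (which is nondecreasing in $n$ because $c_k\ge 0$) to obtain
\[
(1-\tau\gamma_n)a_n + \tau\sum_{k=0}^n b_k \le \tau\sum_{k=0}^{n-1}\gamma_k a_k + G_n .
\]
Because $\tau\gamma_n<1$ we have $\sigma_n=(1-\tau\gamma_n)^{-1}\ge 1$, so multiplying through by $\sigma_n$ and discarding the nonnegative surplus created in the $b_k$ terms yields $a_n+\tau\sum_{k=0}^n b_k \le \sigma_n H_n$, where I set $H_n := \tau\sum_{k=0}^{n-1}\gamma_k a_k + G_n$.

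The key step is to turn the definition of $H_n$ into a recursion. Since $H_{n+1}-H_n = \tau\gamma_n a_n + (G_{n+1}-G_n)$ and $a_n\le \sigma_n H_n$, I would invoke the algebraic identity $1+\tau\gamma_n\sigma_n=\sigma_n$, which is immediate from $\sigma_n=(1-\tau\gamma_n)^{-1}$, to obtain the clean bound
\[
H_{n+1}\le (1+\tau\gamma_n\sigma_n)H_n + \tau c_{n+1} = \sigma_n H_n + \tau c_{n+1},
\]
with $H_0=G_0=\tau c_0+B$. Unrolling this recursion and bounding every partial product $\prod_{j=k}^{n-1}\sigma_j$ by the full product $\prod_{j=0}^{n-1}\sigma_j$ (legitimate since each $\sigma_j\ge 1$) collapses the $c_k$ contributions into a single factor, giving $H_n \le \big(\prod_{j=0}^{n-1}\sigma_j\big)G_n$ and hence $a_n+\tau\sum_{k=0}^n b_k \le \big(\prod_{j=0}^{n}\sigma_j\big)G_n$.

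Finally I would pass from the product to the exponential using the elementary inequality $1+x\le e^x$. Applied with $x=\tau\gamma_j\sigma_j$, together with the same identity $\sigma_j=1+\tau\gamma_j\sigma_j$ used above, this gives $\sigma_j\le \exp(\tau\gamma_j\sigma_j)$, whence $\prod_{j=0}^n\sigma_j \le \exp\big(\tau\sum_{j=0}^n\gamma_j\sigma_j\big)$. Substituting this and recalling the definition of $G_n$ produces exactly the claimed bound.

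I expect the main obstacle to be organizing the first step so that the argument does not stall: because $\sigma_n$ multiplies the \emph{entire} accumulated history $\tau\sum_{k<n}\gamma_k a_k$ rather than a single term, a naive attempt to apply a standard constant-coefficient discrete Gronwall lemma fails. Introducing the cumulative quantity $H_n$ and reducing everything to the one-step recursion $H_{n+1}\le\sigma_n H_n+\tau c_{n+1}$ is precisely what circumvents this; after that, the unrolling and the conversion $\prod\sigma_j\le\exp(\sum\tau\gamma_j\sigma_j)$ are routine. A minor point to check carefully is the monotonicity $G_k\le G_n$ and the sign conditions ($b_k,c_k\ge 0$, $\sigma_j\ge 1$) that justify discarding and enlarging terms in the correct direction at each stage.
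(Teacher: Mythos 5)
Your proof is correct: every step checks out, including the two places where sign conditions are actually needed (discarding the surplus $(\sigma_n-1)\tau\sum_k b_k\ge 0$, and enlarging the partial products $\prod_{j=k}^{n-1}\sigma_j\le\prod_{j=0}^{n-1}\sigma_j$, both legitimate since $b_k\ge 0$ and $\sigma_j\ge 1$), and the two algebraic pivots $\sigma_k=1+\tau\gamma_k\sigma_k$ and $1+x\le e^x$ are used exactly where they must be. Note, however, that the paper itself gives no proof of this lemma --- it is quoted verbatim from Heywood and Rannacher \cite{hey90} --- so there is no internal argument to compare against; measured against the classical proof in that reference, which establishes the product bound $a_n+\tau\sum_{k=0}^n b_k\le\big(\tau\sum_{k=0}^n c_k+B\big)\prod_{k=0}^n\sigma_k$ by direct induction on $n$ before applying $1+x\le e^x$, your organization is a mild but genuine variant: instead of inducting on the target inequality itself, you isolate the implicit term $\tau\gamma_n a_n$, package the history into the cumulative quantity $H_n=\tau\sum_{k<n}\gamma_k a_k+G_n$, and reduce everything to the one-step recursion $H_{n+1}\le\sigma_n H_n+\tau c_{n+1}$, which you then unroll. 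What this buys you is that the induction becomes a transparent linear recursion with an explicit solution, at the small cost of the extra bookkeeping step of collapsing the $c_k$ contributions into the single factor $G_n$; the classical induction avoids that step but hides the same mechanism inside the inductive hypothesis. Either way the content is identical, and your write-up would serve as a complete substitute for the omitted proof.
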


{\begin{lemma}\label{lem:2}( \cite{R1} )
{\it Let $I=[a,b]$ and $\alpha(t), \beta(t), u(t)\in C^0(I)$. Suppose $\beta(t)\ge 0$ and
\[
    u(t)\le \alpha(t)+\int_{a}^t\beta(s)u(s)ds, \ \forall t\in I.
\]
 Then
\[
    u(t)\le \alpha (t)+\int_{a}^t\alpha(s)\beta(s)e^{\int_{s}^t\beta(r) dr} ds,\ \ \forall t\in I.
\]
}
\end{lemma}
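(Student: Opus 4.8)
\proof
The plan is to reduce the integral inequality to a linear differential inequality by means of an integrating factor, the classical device behind the Gronwall lemma. First I would introduce the accumulated quantity $v(t)=\int_a^t\beta(s)u(s)\,ds$, which is continuously differentiable on $I$ by the assumed continuity of $\beta$ and $u$, satisfies $v(a)=0$, and has derivative $v'(t)=\beta(t)u(t)$. The hypothesis then reads $u(t)\le\alpha(t)+v(t)$, and here the sign condition $\beta\ge 0$ is essential: multiplying this inequality by $\beta(t)$ preserves its direction and produces the differential inequality $v'(t)\le\beta(t)\alpha(t)+\beta(t)v(t)$.

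Next I would absorb the term $\beta v$ on the right by the integrating factor $\mu(t)=\exp\bigl(-\int_a^t\beta(r)\,dr\bigr)$. Since $\mu>0$, multiplying through preserves the inequality and recasts it as $\frac{d}{dt}\bigl[\mu(t)v(t)\bigr]\le\alpha(t)\beta(t)\mu(t)$. Integrating from $a$ to $t$ and using $v(a)=0$ gives $\mu(t)v(t)\le\int_a^t\alpha(s)\beta(s)\mu(s)\,ds$; dividing by $\mu(t)>0$ and noting that $\mu(s)/\mu(t)=\exp\bigl(\int_s^t\beta(r)\,dr\bigr)$ yields the bound $v(t)\le\int_a^t\alpha(s)\beta(s)e^{\int_s^t\beta(r)\,dr}\,ds$. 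Substituting this back into $u(t)\le\alpha(t)+v(t)$ delivers exactly the claimed estimate, valid for all $t\in I$.

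There is no serious obstacle here---the statement is the standard integral form of the Gronwall inequality---but two points must be tracked with care. The direction of every inequality has to be monitored, and both multiplications (by $\beta$ and by $\mu$) are legitimate precisely because $\beta\ge 0$ and $\mu>0$; dropping the hypothesis $\beta\ge 0$ would invalidate the first step and the conclusion would fail. The continuity assumptions guarantee that $v$ is $C^1$ and that the fundamental theorem of calculus applies, so the passage between the integral and differential forms, as well as the final integration of the exact derivative, is fully rigorous.
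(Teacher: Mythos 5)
Your proof is correct and is precisely the classical integrating-factor argument behind this lemma, which the paper itself does not prove but simply cites to Bellman \cite{R1}; your derivation (setting $v(t)=\int_a^t\beta(s)u(s)\,ds$, multiplying by $\beta\ge 0$, and integrating $\frac{d}{dt}[\mu v]\le\alpha\beta\mu$ with $\mu(t)=e^{-\int_a^t\beta(r)\,dr}$) matches the standard source proof. Your attention to where $\beta\ge 0$ and $\mu>0$ are used, and to the continuity needed for the fundamental theorem of calculus, is exactly right.
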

}

  Now we are ready to study the convergence of the solution of \eqref{timesystem}.
  Taking the inner product of the first two equations with
 $V^{n+1}-V^n, U^{n+1}-U^n$ and multiplying the third equation  of  \eqref{timesystem} by ${R^{n+1}+R^n}$,
    we derive
\begin{eqnarray*}
    \!\frac{1}{2} \big(\|V^{n+1}\|^2\!-\!\|V^{n}\|^2 \!+\! \|\nabla U^{n+1}\|^2-\|\nabla U^{n}\|^2 \!+\!\lambda \|U^{n+1}\|^2\!-\!
    \lambda\|U^{n}\|^2 \big)\!+\!(R^{n+1})^2\!-\!(R^{n})^2=0,
  \end{eqnarray*}
   which indicates that
\[
    \|V^{n}\|+\| U^{n}\|_{H^1}+ |R^{n}|\lesssim 1,\ \ \forall 1\le n\le N.
\]
   As pointed out in \cite{shen-xu}, energy stable is not sufficient for the convergence
   which typically needs bounds in higher norms.  Following the idea in \cite{shen-xu}, our convergence analysis is along this line:
   we first start from the energy preservation to derive the error bounds in higher norms, (i.e., the $H^2$ estimates) for the
  solution $U^n$, and thus get the $L^{\infty}$ for $U^n$ thanks to the embedding theory, and then we use the bounds in
   $H^2$ norms to show that the
   numerical solution $U^n$ converges to the exact solution $u^n$ in some suitable norms as $\tau$ tends to zero.
   To this end, we   need to the bounds in $H^2$ norm of the PDE system \eqref{con_laws}.
   The error bounds for the solution of \eqref{timesystem} is similar to those of  { the}  PDE system.

   Note that most of the convergence and error analysis for linearly implicit are based on the so called Lipschitz assumption, i.e.,
 \begin{equation}\label{lip-cod}
    |F'(u_1)-F'(u_2)|\le L |u_1-u_2|,\ \ \forall u_1,u_2.
 \end{equation}
   The above assumption greatly limits its range of applicability.
    Following the basic idea of \cite{shen-xu}, we adopt the following assumption instead of the Lipschitz assumption in our convergence analysis:
 \begin{eqnarray}\label{condition1}
   &&|f'(x)|< C (|x|^p+1),\ \ p\ge 0 \ {\rm if}\ n=1,2; \ 0<p<4 \ {\rm if}\ n=3,\\\label{condition2}
   &&|f''(x)|< C (|x|^p+1),\ \ p\ge 0 \ {\rm if}\ n=1,2; \  0<p<3 \ {\rm if}\ n=3.
 \end{eqnarray}
   It has been proved in \cite{shen-xu} that if  $f(u)$ satisfies the conditions \eqref{condition1}-\eqref{condition2}, there holds
   for some $\sigma$, where $ 0\le \sigma<1$,   such that
 \begin{eqnarray}\label{eq:9}
    \| f''(u)\|_{L^{\infty}}+ \| f'(u)\|_{L^{\infty}} \le C (1+\|\nabla\triangle u\|^{\sigma})\le \epsilon \|\nabla\triangle u\|+C_{\epsilon}
 \end{eqnarray}
  for any $\epsilon>0$ with $C_{\epsilon}$ a constant depending on $\epsilon$.

  We present the following estimates for the exact solution of \eqref{con_laws}.
\begin{proposition} Assume that $u$ is the solution of \eqref{con_laws}, and
   $u_0\in H^3, u_1\in H^2$ and \eqref{condition1}-\eqref{condition2} holds. Then for any $T>0$,
 \[
      ( \|\triangle u\| +\|\triangle u_t\|+\|\nabla\triangle u \|)(T)\lesssim 1.
 \]
\end{proposition}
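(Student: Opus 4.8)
The backbone of the argument is the conserved energy $E(t)=E(0)$. Since $F$ is bounded below, this immediately gives the a priori bounds $\|u_t\|+\|\nabla u\|+\|u\|\lesssim 1$ on $[0,T]$, which control every factor that is ``first order'' in what follows and, through the embedding $H^1\hookrightarrow L^4$ ($d\le 3$), also bound $\|u_t\|_{L^4}$ and $\|\nabla u\|_{L^4}^{1/2}$-type quantities via interpolation. The plan is then to climb two further levels of regularity by hyperbolic energy estimates, using the equation \eqref{con_laws} itself to trade time derivatives for the spatial quantities $\|\triangle u\|,\|\triangle u_t\|,\|\nabla\triangle u\|$ that appear in the statement.

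First level: differentiating \eqref{con_laws} in $t$ gives $u_{ttt}=\triangle u_t-\lambda u_t-f'(u)u_t$, and testing with $u_{tt}$ yields
\[
\frac{d}{dt}\Big(\frac12\|u_{tt}\|^2+\frac12\|\nabla u_t\|^2+\frac{\lambda}{2}\|u_t\|^2\Big)=-\int_\Omega f'(u)u_t u_{tt}\,d{\bf x}=:\mathcal R_1 .
\]
Call this energy $\mathcal E_1$; its initial value is finite because $u_0\in H^3,\ u_1\in H^2$ make $u_{tt}(0)=\triangle u_0-\lambda u_0-f(u_0)\in L^2$. Since $\|u_t\|\lesssim 1$, one has $\mathcal R_1\le\|f'(u)\|_{L^\infty}\|u_t\|\|u_{tt}\|$, and here I would invoke \eqref{eq:9} to write $\|f'(u)\|_{L^\infty}\le\epsilon\|\nabla\triangle u\|+C_\epsilon$. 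From the equation directly $\|\triangle u\|\le\|u_{tt}\|+\lambda\|u\|+\|f(u)\|\lesssim\|u_{tt}\|+1$, the term $\|f(u)\|$ being controlled by the growth hypothesis together with the $H^1$/embedding bounds, so controlling $\mathcal E_1$ already delivers the first target $\|\triangle u\|\lesssim 1$.

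Second level: applying $\triangle$ to \eqref{con_laws} (harmless under periodicity) and testing with $\triangle u_t$ gives an identity for $\mathcal E_2:=\frac12\|\triangle u_t\|^2+\frac12\|\nabla\triangle u\|^2+\frac{\lambda}{2}\|\triangle u\|^2$ whose right-hand side is $-\int_\Omega\triangle\big(f(u)\big)\,\triangle u_t\,d{\bf x}$ with $\triangle\big(f(u)\big)=f'(u)\triangle u+f''(u)|\nabla u|^2$. The remaining spatial targets then follow from $\mathcal E_2$ and the equation, since $\|\nabla\triangle u\|\lesssim\|\nabla u_{tt}\|+\|f'(u)\nabla u\|+1$ and $\|\triangle u_t\|\lesssim\|u_{ttt}\|+\|f'(u)u_t\|+1$, where the $\|f'(u)\|_{L^\infty}$ factors are absorbed by \eqref{eq:9}.

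The crux is to bound the nonlinear right-hand sides so that $\frac{d}{dt}(\mathcal E_1+\mathcal E_2)\le C\big(1+\mathcal E_1+\mathcal E_2\big)$, i.e. \emph{strictly linearly} in the energy, because the conservative (non-dissipative) structure offers no damping term into which a top-order contribution could be absorbed, as it would be in the parabolic SAV analysis of \cite{shen-xu}. The genuinely delicate terms are $\int_\Omega f''(u)|\nabla u|^2\,\triangle u_t$ and its time analogue: here $\|f''(u)\|_{L^\infty}$ is controlled only by the highest norm $\|\nabla\triangle u\|^\sigma$, so I would combine \eqref{eq:9} with Gagliardo--Nirenberg interpolation of $\|\nabla u\|_{L^4}$ (resp. $\|u_t\|_{L^4}$) between the conserved $L^2$-bound and $\|\triangle u\|\lesssim\|\nabla\triangle u\|^{1/2}$, then apply Young's inequality, exploiting the strict subcriticality $\sigma<1$ guaranteed by \eqref{condition1}--\eqref{condition2} to keep each product at most linear in $\mathcal E_1+\mathcal E_2$. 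Once such a differential inequality is in hand, Lemma \ref{lem:2} (or its discrete counterpart Lemma \ref{lem1}) closes the estimate uniformly on $[0,T]$, and reading off $\|\triangle u\|,\|\triangle u_t\|,\|\nabla\triangle u\|$ from the bounded energies finishes the proof. I expect the verification that this subcritical balancing indeed produces a linear — rather than superlinear — bound to be the technically hardest and most error-prone step.
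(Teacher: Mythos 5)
Your proposal is essentially correct, and its core coincides with the paper's proof: applying $\triangle$ to \eqref{con_laws} and testing with $\triangle u_t$ is, under periodicity, exactly the paper's device of multiplying by $\triangle^2 u_t$, and the subsequent treatment of $\triangle f(u)=f'(u)\triangle u+f''(u)|\nabla u|^2$ via \eqref{eq:9}, the interpolation bounds $\|\nabla u\|_{L^4}\lesssim \|\nabla\triangle u\|^{d/8}$ and $\|\triangle u\|^2\lesssim\|\nabla\triangle u\|\,\|\nabla u\|$, the subcritical Young balancing to reach $\|\triangle f(u)\|^2\le C(1+\|\nabla\triangle u\|^2)$, and the continuous Gronwall inequality of Lemma \ref{lem:2} is precisely the paper's argument; your identification of this balancing as the delicate step is accurate. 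Where you differ is the added ``first level'': the paper never differentiates the equation in time or estimates $\|u_{tt}\|$, because $\|\triangle u\|$ comes for free from $\|\triangle u\|^2\le\|\nabla\triangle u\|\,\|\nabla u\|\lesssim\|\nabla\triangle u\|$ (together with the $\lambda$-weighted term already inside the second-level energy). Your $\mathcal E_1$ layer is therefore redundant, and it is also the one place your sketch is shaky: the claim that $\|f(u)\|$ is ``controlled by the growth hypothesis together with the $H^1$/embedding bounds'' fails for $d=3$ and $2<p<4$, since $H^1\hookrightarrow L^6$ only covers $p\le 2$; to salvage it one must instead invoke \eqref{eq:9} to write $\|f(u)\|\le C+\|f'(u)\|_{L^\infty}\|u\|\le \epsilon\|\nabla\triangle u\|+C_\epsilon$, which keeps the coupled inequality $\frac{d}{dt}(\mathcal E_1+\mathcal E_2)\le C(1+\mathcal E_1+\mathcal E_2)$ closable but makes the detour pointless. (Also, the ``time analogue'' of the $f''$ term you worry about never arises in the second-level identity; it only appears in your redundant first level, where it is easy.) The trade-off: your route additionally delivers bounds on $\|u_{tt}\|$ and $\|\nabla u_t\|$, which the statement does not ask for; the paper's single-energy argument is leaner and avoids the $\|f(u)\|$ pitfall entirely.
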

\begin{proof}
  First, multiplying $ u_t$ on both sides of \eqref{con_laws} and using the integration by parts yields
\[
   \frac{d}{dt}(\|u_t\|^2+\|\nabla u\|^2+\lambda \|u\|^2+{ 2}\int_{\Omega} F(u)) =0,
\]
  which indicates that
\begin{equation}\label{eq:10}
   \|u_t\|^2+\|u\|_{H^1}^2+\int_{\Omega} F(u)\lesssim 1.
\end{equation}
  On the other hand,  we multiply $\triangle^2 u_t$ on both sides of \eqref{con_laws} and again  use the integration by parts to
  obtain
 \begin{eqnarray}
\begin{split}
    \frac{1}{2}\frac{d}{dt}\left(\|\triangle u_t\|^2+\|\nabla\triangle u \|^2+\lambda \|\triangle u\|^2\right)&={-(\triangle f(u),\triangle u_t). }&\\
\end{split}
 \end{eqnarray}
  Integrating with respect to time between $0$ to  $ t $ and using Cauchy-Schwarz inequality yields
 \begin{eqnarray}\label{eq:5}
     \left(\|\triangle u_t\|^2+\|\nabla\triangle u \|^2 +\lambda \|\triangle u\|^2 \right)(t) &\le &   \left(\|\triangle u_t\|^2+\|\nabla\triangle u \|^2+\lambda \|\triangle u\|^2\right)(0) \nonumber\\
      &+&    \int_{0}^t (\|\triangle u_t\|^2+\|\triangle f(u)\|^2)(t)dt.
 \end{eqnarray}
   By \eqref{eq:9} and the identity
\[
   \triangle f(u) = f'(u)\triangle u + f''(u)|\nabla u|^2,
\]
   we have for all $0\le \delta<1$,
\begin{eqnarray*}
    \|\triangle f(u) \|&\le& \|f''(u)\|_{L^{\infty}}\|\nabla u\|_{L^4}^2+ \|f'(u)\|_{L^{\infty}}\|\triangle u\|\\
     &\le & C (\|f''(u)\|_{L^\infty}+\|f'(u)\|_{L^{\infty}})(\|\nabla u\|_{L^4}^2+\|\triangle u\|)\\
      &\le & C(1+ \|\nabla\triangle u\|^{\delta})({\|\nabla u\|_{L^4}^2}+\|\triangle u\|).
\end{eqnarray*}
  {As for the term $\|\nabla u\|_{L^4}$, we use the Sobolev embedding theory and interpolation inequality about the spaces $H^s$ (see, e.g., \cite{T1}) and then obtain
\[
    \|\nabla u\|_{L^4}\le C \|\nabla u\|_{H^{d/4}}\le C \|\nabla u\|^{1-d/8} \|\nabla \triangle u\|^{d/8}\le C \|\nabla \triangle u\|^{d/8}.
\]
Moreover, by using the integration by parts and \eqref{eq:10},
\[
    \|\triangle u\|^2\le C \|\nabla \triangle u\|\|\nabla u\|\le C \|\nabla \triangle u\|.
\]
  Consequently,
 \[
    \|\triangle f(u) \|^2\le C(1+ \|\nabla\triangle u\|^{2\delta})(\|\nabla u\|_{L^4}^4+\|\triangle u\|^2)\le C(1+ \|\nabla\triangle u\|^{2}).
 \] }
Substituting the above inequality into
 \eqref{eq:5} gives
\begin{eqnarray}
\begin{split}
     \left(\|\triangle u_t\|^2+\|\nabla\triangle u \|^2+\lambda \|\triangle u\|^2\right)(t)&\lesssim   \left(\|\triangle u_t\|^2+\|\nabla\triangle u \|^2+\lambda \|\triangle u\|^2\right)(0)+1+ &\\
     &+\int_{0}^t\left(\|\triangle u_t\|^2+\|\nabla\triangle u \|^2\right)(t) dt  . &
\end{split}
 \end{eqnarray}
  By Gronwall inequality in Lemma \ref{lem:2}, there holds
 \begin{eqnarray*}
 \left(\|\triangle u_t\|^2+\|\nabla\triangle u \|^2+\lambda \|\triangle u\|^2\right)(t)&\lesssim   \left(\|\triangle u_t\|^2+\|\nabla\triangle u \|^2+\lambda \|\triangle u\|^2\right)(0)+1\lesssim 1.
 \end{eqnarray*}
  This finishes our proof.
\end{proof}


  Similar to the proof in the  above Proposition, we also have the following $H^2$ estimates for the  solution of \eqref{timesystem}.

\begin{proposition}
 Assume that $(U^n,V^n,R^n)$ are the solutions of \eqref{timesystem} and
 \eqref{condition1}-\eqref{condition2} holds. Then
 \begin{equation}\label{H2uh}
       \|\triangle U^n\| +\|\triangle V^n\|^2+\|\nabla\triangle U^n \|^2\lesssim 1.
 \end{equation}
\end{proposition}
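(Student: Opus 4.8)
The plan is to mirror the continuous-level argument from the preceding Proposition, replacing time derivatives by the discrete operator $D_\tau$ and integration in time by summation. The discrete energy identity already established for the scheme provides the base-level bounds $\|V^n\|+\|U^n\|_{H^1}+|R^n|\lesssim 1$; I would use these throughout as the analogue of \eqref{eq:10}. First I would test the second equation of \eqref{timesystem} against a discrete analogue of $\triangle^2 U_t$. Concretely, since the system is posed with periodic boundary conditions, I would pair $D_\tau V^{n+1}$ with $\triangle^2(U^{n+1}-U^n)$ (equivalently use the self-adjointness of $\triangle$ under periodicity to move two Laplacians onto each factor) so that the linear terms telescope. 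This should yield, after summing over the time index and using $ab\le\tfrac12 a^2+\tfrac12 b^2$ on the cross terms, a discrete inequality of the form
\begin{eqnarray*}
\big(\|\triangle V^{n}\|^2+\|\nabla\triangle U^{n}\|^2+\lambda\|\triangle U^{n}\|^2\big)
&\lesssim&\big(\|\triangle V^{0}\|^2+\|\nabla\triangle U^{0}\|^2+\lambda\|\triangle U^{0}\|^2\big)\\
&&+\,\tau\sum_{k}\big(\|\triangle V^{k}\|^2+\|\triangle f(\tilde U^{k+\frac12})\|^2\big),
\end{eqnarray*}
which is the exact discrete counterpart of \eqref{eq:5}. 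The initial data term is controlled because $U^0=u_0\in H^3$ and $V^0=u_1\in H^2$.

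The crucial middle step is bounding the nonlinear contribution $\|\triangle f(\tilde U^{k+\frac12})\|^2$. Using the product rule $\triangle f(w)=f'(w)\triangle w+f''(w)|\nabla w|^2$ with $w=\tilde U^{k+\frac12}=\tfrac12(3U^{k}-U^{k-1})$, together with the key estimate \eqref{eq:9} and the interpolation/embedding bounds $\|\nabla w\|_{L^4}\le C\|\nabla\triangle w\|^{d/8}$ and $\|\triangle w\|^2\le C\|\nabla\triangle w\|$, I would obtain exactly as in the Proposition that
\[
\|\triangle f(\tilde U^{k+\frac12})\|^2\le C\big(1+\|\nabla\triangle\tilde U^{k+\frac12}\|^2\big).
\]
Because $\tilde U^{k+\frac12}$ is a fixed linear combination of $U^{k}$ and $U^{k-1}$, the right-hand side is bounded by $C(1+\|\nabla\triangle U^{k}\|^2+\|\nabla\triangle U^{k-1}\|^2)$, so it feeds back into the summation in the form the discrete Gronwall lemma can absorb.

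Finally I would invoke the discrete Gronwall inequality, Lemma \ref{lem1}, identifying $a_n=\|\triangle V^{n}\|^2+\|\nabla\triangle U^{n}\|^2+\lambda\|\triangle U^{n}\|^2$, the constant $B$ with the bounded initial data plus the $O(1)$ contribution, $\gamma_k\equiv C$, and $c_k$ with the constant term; the hypothesis $\tau\gamma_k<1$ holds for $\tau$ small. This closes the estimate and yields $\|\triangle V^{n}\|^2+\|\nabla\triangle U^{n}\|^2+\lambda\|\triangle U^{n}\|^2\lesssim 1$, whence also $\|\triangle U^n\|\lesssim 1$ via $\|\triangle U^n\|^2\le C\|\nabla\triangle U^n\|\,\|\nabla U^n\|\lesssim 1$, giving \eqref{H2uh}.

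I expect the main obstacle to be the discrete summation-by-parts bookkeeping rather than any genuinely new idea. Unlike the continuous case, the explicit extrapolation $\tilde U^{k+\frac12}$ in the nonlinear term is evaluated at the previous two levels, so the nonlinear bound produces $\|\nabla\triangle U^{k}\|^2$ and $\|\nabla\triangle U^{k-1}\|^2$ with a shifted index; one must verify that this shift is harmless for Lemma \ref{lem1} (it is, since each appears in the sum with a bounded coefficient, and a two-step induction or reindexing handles the $k-1$ term). A second delicate point is that the cross term from $(\triangle f,\triangle V)$ must be split so that the $\|\nabla\triangle U\|$-dependence coming from \eqref{eq:9} is kept at the $\epsilon$-small level and absorbed into the left-hand side before Gronwall is applied; this is precisely the role of the $\epsilon\|\nabla\triangle u\|+C_\epsilon$ form of \eqref{eq:9}, and choosing $\epsilon$ appropriately is what makes the quadratic feedback closeable.
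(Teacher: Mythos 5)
Your proposal is correct, but it takes a genuinely different route from the paper at the decisive step. The paper integrates by parts only once in space, keeping the nonlinear contribution in the form $(\nabla f_1(\tilde U^{n+\frac 12}),\nabla\triangle(U^{n+1}-U^n))$; since $\nabla\triangle(U^{n+1}-U^n)=\tau\nabla\triangle\hat V^{n+\frac 12}$ carries one more derivative of $V$ than the energy $\|\triangle V^n\|^2+\|\nabla\triangle U^n\|^2$ controls, the paper is forced to Abel-sum in time, shifting the difference onto $f_1$, estimating the resulting boundary terms at level $m+1$ via the $\epsilon\|\nabla\triangle U\|+C_{\epsilon}$ form of \eqref{eq:9} (their \eqref{eq:2} and \eqref{eq:4}), and expanding $f_1(\tilde U^{n-\frac 12})-f_1(\tilde U^{n+\frac 12})$ by the mean value theorem together with the first equation of \eqref{timesystem} to gain the factor $\tau$ in the sums $I$ and $I_1$. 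You instead put both Laplacians on the nonlinearity, so the cross term becomes $\tau(\triangle f(\tilde U^{n+\frac 12}),\triangle\hat V^{n+\frac 12})$ with \emph{both} factors controlled: $\|\triangle\hat V^{n+\frac 12}\|$ by the energy, and $\|\triangle f(\tilde U^{n+\frac 12})\|^2\le C(1+\|\nabla\triangle\tilde U^{n+\frac 12}\|^2)$ by exactly the chain of estimates in the continuous Proposition, which is legitimate here because the discrete energy identity gives $\|\nabla U^n\|\lesssim 1$ and the scalar prefactor $\hat R^{n+\frac 12}/\sqrt{E(\tilde U^{n+\frac 12})}$ is bounded. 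This buys a visibly simpler argument: no summation by parts in time, no boundary terms, no $\epsilon$-absorption into the left-hand side, and your bound $\|\triangle U^n\|^2\le\|\nabla U^n\|\,\|\nabla\triangle U^n\|$ replaces the paper's separate $\lambda=0$ recursion $\triangle U^{m+1}=\triangle U^m+\frac{\tau}{2}(\triangle V^{m+1}+\triangle V^m)$; the index shift from the extrapolation is, as you say, harmless for Lemma \ref{lem1}. Two small remarks: the worry in your final paragraph about keeping the $\|\nabla\triangle U\|$-dependence at the $\epsilon$-level and absorbing it into the left-hand side is actually moot in your own scheme, since every occurrence of $\|\nabla\triangle U^k\|^2$ on the right carries a factor $\tau$ and is handled by the discrete Gronwall lemma alone (that absorption is needed only in the paper's route, because its untelescoped boundary term $(\nabla f_1(U^{m+1}),\nabla\triangle U^{m+1})$ has no $\tau$ prefactor); and, as in the paper's own proof, your argument implicitly requires $u_0\in H^3$, $u_1\in H^2$ so that $\|\nabla\triangle U^0\|$ and $\|\triangle V^0\|$ are finite, an assumption absent from the proposition statement but shared by both proofs.
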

\begin{proof}  First, we have from the first equation of \eqref{timesystem} that
\[
    D_{\tau}\nabla \triangle  U^{n+1}=\nabla \triangle \hat V^{n+{ \frac 12}}.
\]
Multiplying  the above equation with $\nabla  (V^{n+1}-V^{n})$ and the  second equation of \eqref{timesystem}  with $ \triangle^2 (U^{n+1}-U^n)$, and then using the
integration by parts, we obtain
\begin{eqnarray*}
     &&\|\nabla\triangle U^{n+1}\|^2- \|\nabla\triangle U^{n}\|^2+\|\triangle V^{n+1}\|^2-\|\triangle V^{n}\|^2+\lambda \| \triangle U^{n+1}\|^2-\lambda \|\triangle U^{n}\|^2\\
     &&= \frac{{ 2}\hat R^{n+\frac 12}}{\sqrt {E(\tilde U^{n+\frac 12})}} (\nabla f(\tilde U^{n+\frac 12}), \nabla\triangle (U^{n+1}-U^n)).
\end{eqnarray*}
Denoting { $f_1(\tilde U^{n+\frac 12})=\frac{ 2\hat R^{n+\frac 12}f(\tilde U^{n+\frac 12})}{\sqrt {E(\tilde U^{n+\frac 12})}} $ }
and summing up the above equation for all $n$ from $0$ to $m$ yields
\begin{eqnarray}\label{eq:6}
\begin{split}
    & \|\nabla\triangle U^{m+1}\|^2- \|\nabla\triangle U^{0}\|^2+\|\triangle V^{m+1}\|^2-\|\triangle V^{0}\|^2+\lambda \| \triangle U^{m+1}\|^2-\lambda \|\triangle U^{0}\|^2 &\\
      & =  \!(\nabla f_1(\tilde  U^{m\!+\!\frac 12}),\!  \nabla\triangle \!U^{m\!+\!1}\! )\!-\!(\nabla f_1\!(\tilde U^{\frac 12}\!) , \!\nabla\triangle\! U^{0} \!)
      \!+\! \sum_{n=1}^m \!(\nabla \! (f_1(\tilde U^{n\!-\!\frac 12})\!-\!f_1(\tilde U^{n+\frac 12})\!), \nabla\triangle U^n) &\\
      & =  (\nabla f_1(  U^{m+1}),  \nabla\triangle U^{m+1} )-(\nabla f_1(\tilde U^{\frac 12}) , \nabla\triangle U^{0} )
      + I_1-I, &
\end{split}
\end{eqnarray}
  where { $f_1( U^{m+1})=\frac{ 2\hat R^{m+\frac 12}f(  U^{m+1})}{\sqrt {E(\tilde U^{m+\frac 12})}} $ }, and
 \[
     I=  (\nabla f_1( U^{m+1})-\nabla  f_1(\tilde U^{m+\frac 12}),  \nabla\triangle U^{m+1} ),\ \ \
     I_1=\sum_{n=1}^m (\nabla  (f_1(\tilde U^{n-\frac 12})-f_1(\tilde U^{n+\frac 12})), \nabla\triangle U^n).
 \]
 { Since $R^{n}$ is bounded and $E(U)$ is bounded from below, we have
   from \eqref{eq:9} that,
\begin{equation}\label{eq:2}
   \|\nabla f_1(U^{m+1})\|\le C (1+\|f'(U^{m+1})\|_{L^{\infty}})\le \epsilon \|\nabla\triangle U^{m+1}\|+C_{\epsilon}.
\end{equation}}
 Consequently,
 \begin{equation}\label{eq:4}
   | (\nabla f_1( U^{m+1}),  \nabla\triangle U^{m+1} )-(\nabla f_1(\tilde U^{\frac 12}) , \nabla\triangle U^{0} )|\le
   \frac{1}{4}\|\nabla\triangle U^{m+1}\|^2+\frac{1}{4}\|\nabla\triangle U^{0}\|^2+C.
 \end{equation}
 On the other hand, we note that
\begin{eqnarray*}
\begin{split}
   & \nabla  f_1(\tilde U^{n-\frac 12})-\nabla  f_1(\tilde U^{n+\frac 12})&\\
   &=f'_1(\tilde U^{n-\frac 12})\nabla \tilde U^{n-\frac 12}-f'_1(\tilde U^{n+\frac 12})\nabla \tilde U^{n+\frac 12}\\
   &=f'_1(\tilde U^{n-\frac 12})(\nabla \tilde U^{n-\frac 12}-\nabla \tilde U^{n+\frac 12})+
  (f'_1(\tilde U^{n-\frac 12})-  f'_1(\tilde U^{n+\frac 12}))\nabla \tilde U^{n+\frac 12}&\\
   &=\frac{\tau}{2}f'_1(\tilde U^{n-\frac 12})\nabla (\hat V^{n-\frac 32}-3\hat  V^{n-\frac 12})+\frac{\tau}{2} f''_1(\theta \tilde U^{n+\frac 12}+(1-\theta)\tilde U^{n-\frac 12})(\hat  V^{n-\frac 32}-3\hat V^{n-\frac 12})\nabla \tilde U^{n+\frac 12}
\end{split}
\end{eqnarray*}
  for some $\theta\in (0,1)$,
  where in the last step, we have used the first equation of {\eqref{timesystem}}, which yields
 \[
    \nabla \tilde U^{n-\frac 12}-\nabla \tilde U^{n+\frac 12}={\frac 12\nabla (4U^{n-1}-3U^n-U^{n-2})
    =\frac{\tau}{2} \nabla(\hat  V^{n-\frac 32}-3\hat V^{n-\frac 12}).}
 \]
    By  \eqref{eq:2} and  the fact that
\[
    \|V^n\|+\|\nabla U^n\|\lesssim 1,
\]
we have
 \begin{eqnarray*}
      \| \nabla  f_1\!(\tilde U^{n-\frac 12}\!)\!-\!\nabla  \! f_1\!(\tilde  U^{n+\frac 12}\!)\!\|^2\!\le \!\tau^2 (\epsilon \| \nabla\triangle \tilde  U^{n\!-\!\frac 12}\|^2\!+\! \epsilon\| \nabla\triangle \tilde  U^{n+\frac 12}\|^2
      \!+\!\|  \nabla(\hat  V^{n-\frac 32}\!-3\!\hat   V^{n-\frac 12})\|^2\!+\!C^2),
 \end{eqnarray*}
   and thus
\begin{eqnarray*}
    |I_1| \le C\tau\sum_{n=1}^m   (\|\nabla\triangle U^{n}\|^2+ \|{\nabla} V^{n}\|^2) +C\tau.
\end{eqnarray*}
   Similarly, there holds
 \begin{eqnarray*}
      \| \nabla  f_1(\tilde U^{m+\frac 12})-\nabla  f_1(  U^{m+1})\|^2\le \tau^2
      \epsilon \sum_{n=m-1}^{m+1} (\|  \nabla  V^{n}\|^2)+  \| \nabla\triangle  U^{n}\|^2)+C_{\epsilon}\tau^2.
 \end{eqnarray*}
   Then
 \[
   | I| \le  \tau^2
       \sum_{n=m-1}^{m+1} (\|  \nabla  V^{n}\|^2)+ \epsilon \| \nabla\triangle  { U}^{n}\|^2)+C_{\epsilon}\tau^2+\frac{1}{4} \| \nabla\triangle  U^{m+1}\|^2.
 \]
     Substituting \eqref{eq:4}, the estimates of $I_1$ and $I$ into \eqref{eq:6} yields
\begin{eqnarray*}
    \frac 12 \|\nabla\triangle U^{m+1}\|^2+\|\triangle V^{m+1}\|^2+\lambda \|\triangle U^{m+1}\|^2&\le& \|\triangle V^{0}\|^2
    +\|\nabla\triangle U^0\|^2+\lambda \|\triangle U^{0}\|^2\\
    &+& C\tau\sum_{n=1}^{m+1}   (\|\nabla\triangle U^{n}\|^2+ \|\triangle V^{n}\|^2) +C.
    \end{eqnarray*}
  By Gronwall inequality, we have
\[
  \|\nabla\triangle U^{m+1}\|^2+\|\triangle V^{m+1}\|^2+\lambda \|\triangle U^{m+1}\|^2\le  \|\triangle V^{0}\|^2
    +\|\nabla\triangle U^0\|^2+\lambda \|\triangle U^{0}\|^2\le C.
\]
  In case $\lambda=0$, we  note that
\[
     \triangle U^{m+1}=\triangle U^{m}+\frac{\tau}{2}  (\triangle V^{{m}+1}+\triangle V^{{m}}).
\]
  Then
 \[
      \| \triangle U^{m+1}\| \le \| \triangle U^{m}\|+\frac{\tau}{2} \| \triangle  V^{m+1}+\triangle V^{m}\|,
 \]
   which yields
\[
      \| \triangle U^{m+1}\| \le \| \triangle U^{0}\|+C\le C.
\]
  This finishes the proof of \eqref{H2uh}. The proof is complete.
\end{proof}

As a direct consequence of \eqref{H2uh} and  the embedding  inequality, we have
\begin{equation}\label{inftyuh}
    \|U^n\|_{\infty}\le C \|\triangle U^{n}\| \le  C,\ \ \forall n.
\end{equation}

\begin{remark}
   Following the same argument as that in \cite{shen-xu}, we conclude that:
   Assume that $u_0\in H^3$, when $\tau $ tends to zero, we have
   $U^n\rightarrow u^n$ strongly in $L^{\infty}(0,T; H^{3-\epsilon}),\ \forall\epsilon>0$, weak-star in
 $L^{\infty}(0,T; H^{3})$,  $V^n\rightarrow v^n$  weak-star in  $L^{\infty}(0,T; H^{2})$, and
 $R^n\rightarrow r^n$ weak-star in $L^{\infty}(0,T)$;
\end{remark}

\begin{theorem}\label{theo:11}
 Suppose $u$ is the solution of \eqref{con_laws}, satisfying
 \[
      \|u_0\|_{H^{2}}+\|u\|_{L^{\infty}((0,T),H^{2})}+\|u_t\|_{L^{2}((0,T),H^{2})}+\|u_{tt}\|_{L^{2}((0,T),H^{2})}\lesssim 1.
 \]
     Then \eqref{timesystem} admits a unique solution $(U^n,V^n,R^n)$ such that
\[
    \|\ u^n-  U^n\|_{H^1}+\|v^n-V^n\|+|r^n-R^n|\lesssim \tau^2.
\]
\end{theorem}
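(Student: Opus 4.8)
The plan is to run a discrete energy argument on the error sequences
$e^n=u^n-U^n$, $\eta^n=v^n-V^n$ and $\xi^n=r^n-R^n$, which all vanish at $n=0$ by the choice of initial data $U^0=u_0$, $V^0=u_1$, $R^0=\sqrt{E(u_0)}$. First I would insert the exact values $(u^n,v^n,r^n)$ into \eqref{timesystem} and record the consistency residuals $P^{n+1},Q^{n+1},S^{n+1}$; using $u_t=v$, the Crank--Nicolson averaging, and the extrapolation identity $\tilde u^{n+\frac12}=\frac12(3u^n-u^{n-1})=u(t_{n+\frac12})+O(\tau^2 u_{tt})$, the hypotheses $\|u\|_{L^\infty(H^2)}+\|u_t\|_{L^2(H^2)}+\|u_{tt}\|_{L^2(H^2)}\lesssim1$ make these residuals second order, so that $\tau\sum_n(\|P^{n+1}\|^2+\|Q^{n+1}\|^2+|S^{n+1}|^2)\lesssim\tau^4$. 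Subtracting \eqref{timesystem} from the residual equations gives a linear error system whose only genuinely nonlinear ingredients are the SAV differences $G^{n+1}=\frac{\hat r^{n+\frac12}}{\sqrt{E(\tilde u^{n+\frac12})}}f(\tilde u^{n+\frac12})-\frac{\hat R^{n+\frac12}}{\sqrt{E(\tilde U^{n+\frac12})}}f(\tilde U^{n+\frac12})$ in the momentum equation and the analogous difference $\Phi_u-\Phi_U$ of the $r$-equation right-hand sides.

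Next, mirroring the energy-conservation computation of Section \ref{anal-1}, I would test the first error equation with $\eta^{n+1}-\eta^n$, the second with $e^{n+1}-e^n$, and multiply the third by $\xi^{n+1}+\xi^n$. The two cross terms $\frac1\tau(e^{n+1}-e^n,\eta^{n+1}-\eta^n)$ cancel, so the discrete energy $\mathcal E^n=\frac12(\|\eta^n\|^2+\|\nabla e^n\|^2+\lambda\|e^n\|^2)+(\xi^n)^2$ telescopes and one is left with
\[
\mathcal E^{n+1}-\mathcal E^n=-(P^{n+1},\eta^{n+1}-\eta^n)+(Q^{n+1},e^{n+1}-e^n)-(G^{n+1},e^{n+1}-e^n)+(\Phi_u-\Phi_U+S^{n+1})(\xi^{n+1}+\xi^n).
\]
Summation of this identity from $0$ to $m$ is the quantity I would feed into the Gronwall inequality.

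The heart of the matter is bounding the right-hand side uniformly in $\tau$ and $h$. For the nonlinear term I would split $G^{n+1}$ into three single-factor differences coming from $\hat r^{n+\frac12}-\hat R^{n+\frac12}=\hat\xi^{n+\frac12}$, from $\frac{1}{\sqrt{E(\tilde u^{n+\frac12})}}-\frac{1}{\sqrt{E(\tilde U^{n+\frac12})}}$, and from $f(\tilde u^{n+\frac12})-f(\tilde U^{n+\frac12})$, each controlled by the uniform lower bound $E(\cdot)\ge C_0-c_0>0$ on the denominators, the bounds $\|u^n\|_\infty,\|U^n\|_\infty\le C$ supplied by the two $H^2$ a priori estimates established above together with \eqref{inftyuh} and the embedding, and the growth control \eqref{eq:9}; since $\tilde e^{n+\frac12}=\frac12(3e^n-e^{n-1})$, this yields $\|G^{n+1}\|\lesssim\sqrt{\mathcal E^{n+1}}+\sqrt{\mathcal E^n}+\sqrt{\mathcal E^{n-1}}$, and similarly for $\Phi_u-\Phi_U$. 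The decisive point is that the increment $e^{n+1}-e^n=\frac\tau2(\eta^{n+1}+\eta^n)+\tau P^{n+1}$ read off from the first error equation is $O(\tau)$, so Cauchy--Schwarz turns $(G^{n+1},e^{n+1}-e^n)$ and $(\Phi_u-\Phi_U)(\xi^{n+1}+\xi^n)$ into terms of the form $\tau(\mathcal E^{n+1}+\mathcal E^n+\mathcal E^{n-1})$ plus $O(\tau)\|P^{n+1}\|^2$; unlike the $H^2$ Proposition above, no summation by parts is needed here because every factor is already error-order. The residual pairings containing the undifferenced increments $\eta^{n+1}-\eta^n$ (which $\mathcal E$ does not control, as they involve $\triangle\hat e^{n+\frac12}$) I would instead handle by summation by parts in $n$, moving the difference onto $P^{n+1}-P^n=O(\tau^3)$ and using Young's inequality, so that their total contribution is again $O(\tau^4)$.

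Collecting these estimates gives $\mathcal E^{m+1}\le C\tau\sum_{n=0}^{m+1}\mathcal E^n+C\tau^4$, whence the discrete Gronwall inequality (Lemma \ref{lem1}) yields $\mathcal E^{m+1}\lesssim\tau^4$ and therefore $\|\eta^n\|+\|\nabla e^n\|+|\xi^n|\lesssim\tau^2$; when $\lambda>0$ the bound on $\|e^n\|$ is immediate, while for $\lambda=0$ it is recovered by summing $e^{n+1}-e^n=\frac\tau2(\eta^{n+1}+\eta^n)+\tau P^{n+1}$ from $e^0=0$. Unique solvability of \eqref{timesystem} at each step follows from its linear structure: eliminating $V^{n+1}$ and $R^{n+1}$ reduces the step to a single equation for $U^{n+1}$ of exactly the SAV form \eqref{eq:11}, a symmetric positive-definite principal part plus a rank-one update, which is invertible by the Sherman--Morrison argument indicated after \eqref{eq:11}. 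I expect the main obstacle to be the nonlinear SAV differences: controlling $G^{n+1}$ and $\Phi_u-\Phi_U$ uniformly in $\tau,h$ genuinely relies on the a priori $H^2$ (hence $L^\infty$) bounds for both $u^n$ and $U^n$ and on \eqref{eq:9}, and one must verify that the three-level coupling introduced by the extrapolation $\tilde e^{n+\frac12}$ still closes under the Gronwall estimate.
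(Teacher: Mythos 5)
Your proposal is correct and reproduces the paper's own strategy almost step for step: vanishing initial errors, second-order consistency residuals (the paper's $T_1,T_2,T_3$ with $\|T_1\|,\|T_2\|\lesssim\tau^2$, $|T_3|\lesssim\tau^3$), testing with $E_v^{n+1}-E_v^n$, $E_u^{n+1}-E_u^n$ and $E_r^{n+1}+E_r^n$ so the cross terms cancel and the energy telescopes, control of the SAV nonlinearity through the lower bound on $E(\cdot)$ and the $L^\infty$ bounds furnished by the two $H^2$ propositions and \eqref{inftyuh}, the observation that $\|DE_u^{n+1}\|\lesssim\tau^3+\tau\|\hat E_v^{n+\frac12}\|$ makes every nonlinear pairing $O(\tau)\times$energy, and the discrete Gronwall inequality of Lemma \ref{lem1}. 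The one genuine divergence is your treatment of the pairing of the first residual with $\eta^{n+1}-\eta^n$ (the paper's $(T_1,DE_v^{n+1})$), which the energy does not control: the paper substitutes the second error equation for $D_\tau E_v^{n+1}$ and integrates by parts in space, converting the dangerous term into $-\tau(\nabla T_1,\nabla\hat E_u^{n+\frac12})$ plus lower-order pairings, all absorbed since $\|\nabla T_1\|\lesssim\tau^2$ under the $H^2$ hypotheses; you instead use Abel summation in $n$, shifting the time difference onto $P^{n+1}-P^n$. Your device certainly closes the argument---it is exactly how the paper itself handles $\sum_n R_1(D\xi_v^{n+1})$ in the fully discrete proof of Theorem \ref{theorem:1}---but note the trade-off: it requires the extra estimate $\sum_n\|P^{n+1}-P^n\|^2\lesssim\tau^5$, i.e.\ one additional time difference of the truncation error, whereas the paper's substitution trick spends only spatial regularity already contained in the stated assumptions. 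Two smaller points in your favor: your explicit recovery of $\|e^n\|_{L^2}$ in the case $\lambda=0$ by summing the increments $e^{n+1}-e^n=\frac\tau2(\eta^{n+1}+\eta^n)+\tau P^{n+1}$ addresses a step the paper's proof passes over silently (its Gronwall bound controls $\lambda\|E_u^n\|^2$ only), and your solvability argument via the symmetric positive-definite principal part plus rank-one update matches the discussion following \eqref{eq:11}.
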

\begin{proof}
   First we denote
 \[
     E^n_u=u^n-U^n, \ \ E^n_v=v^n-V^n,\ \ E^n_r=r^n-R^n,\ \ H(u)=\frac{f(u)}{\sqrt{E(u)}}.
 \]
   {By  taking $t=t_{n+\frac 12}$ in \eqref{con_laws1}}  and using  \eqref{timesystem},  we get
 \begin{eqnarray}\label{sav-time}
 \begin{split}
    & D_{\tau}E_u^{n+1}=\hat E_v^{n+\frac 12}+T_1, &&\\
   &D_{\tau} E_v^{n+1}=\triangle \hat E_u^{n+\frac 12}{-}\lambda \hat E_u^{n+\frac 12}
    - \hat r^{n+\frac 12} H( u^{n+\frac 12})+\hat R^{n+\frac 12}H(\tilde U^{n+\frac 12})+T_2, &&\\
    &  {E_r^{n+1}- E_r^{n}} =\frac{1}{2}\int_{\Omega} H( u^{n+\frac 12}) ({u^{n+1}-u^n})-H( \tilde U^{n+\frac 12}) ({U^{n+1}-U^n})  d{\bf x}+T_3,  &&
 \end{split}
\end{eqnarray}
 where $T_i$ denote the truncation errors,   i.e.,
 \begin{eqnarray*}
   &&T_1 =D_{\tau}u^{n+1}-u^{n+\frac 12}_t+v^{n+\frac 12}-  \hat v^{n+\frac 12},  \\
   && T_2 = \triangle (u^{n+\frac 12}-\hat u^{n+\frac 12})+D_{\tau}v^{n+1}-v^{n+\frac 12}_t{-}\lambda u^{n+\frac 12}{+}\lambda \hat u^{n+\frac 12}+ (\hat r^{n+\frac 12}-r^{n+\frac 12})H( u^{n+\frac 12}), \\
  && T_3= \tau(D_{\tau}r^{n+1}- r^{n+\frac 12}_t){-} \frac{\tau}{2}\int_{\Omega} H( u^{n+\frac 12}) \big(D_{\tau}u^{n+1}-u^{n+\frac 12}_t \big)d{\bf x}.
 \end{eqnarray*}
   Multiplying the first equation with $E_v^{n+1}-E_v^{n}$, the second equation with $E_u^{n+1}-E_u^{n}$, and
   the third equation with
   $E_r^{n+1}+E^{n}_r$ in   \eqref{sav-time}, and then summing up three equalities, we get
 \begin{eqnarray}\label{eq:22}
\begin{split}
    & \frac{1}{2} \big(\|E_v^{n+1}\|^2-\|E_v^{n}\|^2 + \|\nabla E_u^{n+1}\|^2-\|\nabla E_u^{n}\|^2+\lambda\|E_u^{n+1}\|^2-\lambda \|E_u^{n}\|^2   \big)+|E_r^{n+1}|^2-|E_r^{n}|^2  \\
    &=(I_2,DE_u^{n+1})+(T_2,DE_u^{n+1})-(T_1,DE_v^{n+1})+2(T_3+I_3)\hat E_r^{n+\frac 12},
\end{split}
  \end{eqnarray}
    where
 \begin{eqnarray}
     && I_2= {\hat r^{n+\frac 12}}(H(\tilde U^{n+\frac 12})-H( u^{n+\frac 12})),\\
     &&  I_3=\frac{1}{2}\int_{\Omega} \big(H(u^{n+\frac 12})- H( \tilde U^{n+\frac 12})\big) ({u^{n+1}-u^n})  d{\bf x}.
 \end{eqnarray}
   We next estimate the terms $T_i, I_i, i\le 3$ respectively.
   By Taylor expansion, there holds
\begin{eqnarray*}
    \|T_1\|\lesssim  \tau^2, \ \
     \|T_2\|\lesssim \tau^2,\ \
     |T_3|\lesssim \tau^3.
\end{eqnarray*}
 By  \eqref{inftyuh} and the  fact that  $f\in C^2(\mathbb R)$, we have
 \[
    | H(U^n)|+ | H'(U^n)| +  | f'(U^n)|+ | f''(U^n)|\lesssim 1,\ \ \forall n.
 \]
 Then there exists some $\theta\in (0,1)$ such that
\begin{eqnarray*}
    \| H(\tilde U^{n+\frac 12})-H( u^{n+\frac 12})\| &= &\| H'(\theta
    \tilde U^{n+\frac 12} + (1-\theta) u^{n+\frac 12})(\tilde E_u^{n+\frac 12}+\tilde u^{n+\frac 12}-u^{n+\frac 12})\|\\
      &\lesssim & \tau^2+\|\tilde E_u^{n+\frac 12}\|,
\end{eqnarray*}
  and thus
\begin{eqnarray*}
    \|I_2\|\lesssim \tau^2+\|\tilde E_u^{n+\frac 12}\|, \ \
    |I_3|\lesssim  \tau (\tau^2+\|\tilde E_u^{n+\frac 12}\|).
  \end{eqnarray*}
  Consequently,
\begin{equation}\label{I3}
    |(T_3+I_3)\hat E_r^{n+\frac 12}|\lesssim \tau |\hat E_r^{n+\frac 12}|^2+\tau^{-1}(|I_3|^2+|T_3|^2)\lesssim \tau^5+\tau ( |\hat E_r^{n+\frac 12}|^2+\|\tilde E_u^{n+\frac 12}\|^2).
\end{equation}
    Note that
\[
   \|DE_u^{n+1}\|=\tau \|\hat E_v^{n+\frac 12}+T_1\|\lesssim \tau^3+\tau \|\hat E_v^{n+\frac 12}\|.
\]
   Then
\begin{equation}\label{I2}
   |(I_2+T_2,DE_u^{n+1})|\lesssim \tau \|I_2\|^2+\tau \|T_2\|^2+\tau^{-1}\|DE_u^{n+1}\|^2\lesssim\tau^5+\tau (\|\hat E_v^{n+\frac 12}\|^2+\|\tilde E_u^{n+\frac 12}\|{^2}).
\end{equation}
  On the other hand, in light of  the second equation of \eqref{sav-time}, we have
 \begin{eqnarray*}
    (T_1,DE_v^{n+1})&=&\tau(T_1,\triangle \hat E_u^{n+\frac 12}{-}\lambda \hat E_u^{n+\frac 12}
    - \hat r^{n+\frac 12} H( u^{n+\frac 12})+\hat R^{n+\frac 12}H(\tilde U^{n+\frac 12})+T_2)\\
    &=&\!-\!\tau(\nabla T_1, \!\nabla \hat E_u^{n+\frac 12})\!+\!\tau (T_1, {-}\lambda \hat E_u^{n\!+\!\frac 12}\!-\! \hat r^{n\!+\!\frac 12} H( u^{n\!+\!\frac 12})\!+\!\hat R^{n\!+\!\frac 12}
    H(\tilde U^{n\!+\!\frac 12})\!+\!T_2).
 \end{eqnarray*}
    Noticing that
 \begin{eqnarray*}
   \|\hat r^{n+\frac 12} H( u^{n+\frac 12}){-}\hat R^{n+\frac 12}H(\tilde U^{n+\frac 12})\|
   &=&\|\hat E_r^{n+\frac 12} H( u^{n+\frac 12})+\hat R^{n+\frac 12}(H( u^{n+\frac 12})-H(\tilde U^{n+\frac 12}))\|\\
   &\lesssim & |E_r^{{n}+\frac 12}|+\tau^2+\|\tilde E_u^{n+\frac 12}\|,
\end{eqnarray*}
    we have
\begin{equation}\label{I1}
   |(T_1,DE_v^{n+1})|\lesssim \tau^3 (\|\nabla \hat E_u^{n+\frac 12}\|+\|\hat E_u^{n+\frac 12}\|^2+\tau^2+|E_r^{{n}+\frac 12}|+\|\tilde E_u^{n+\frac 12}\|).
\end{equation}
  Plugging \eqref{I3}-\eqref{I1} into \eqref{eq:22} yields
\begin{eqnarray*}
  &&\frac{1}{2} \big(\|E_v^{n\!+\!1}\|^2\!-\!\|E_v^{n}\|^2\! +\! \|\nabla E_u^{n\!+\!1}\|^2\!-\!\|\nabla E_u^{n}\|^2\!+\!\lambda\|E_u^{n+1}\|^2\!-\!\lambda \|E_u^{n}\|^2   \big)\!+\!(E_r^{n+1})^2\!-\!(E_r^{n})^2\\
  &&\lesssim   \tau^5+ \tau ( \|\tilde E_u^{n+\frac 12}\|^2 +\|\hat E_v^{n+\frac 12}\|^2 + \|\nabla \hat E_u^{n+\frac 12}\|^2+\|\hat E_u^{n+\frac 12}\|^2 + |\hat E_r^{n+\frac 12}|^2 \big).
\end{eqnarray*}
   Summing up all $n$ from $0$ to $m$ and using the initial values, we get
\begin{eqnarray*}
  && \|E_v^{m+1}\|^2+ \|\nabla E_u^{m+1}\|^2+\lambda\|E_u^{m+1}\|^2+|E_r^{m+1}|^2\\
  && \le C\tau^4+C \tau \sum_{n=0}^{m+1}(\|E_v^{n}\|^2+ \|\nabla E_u^{n}\|^2+\lambda\|E_u^{n}\|^2+|E_r^{n}|^2).
\end{eqnarray*}
 Then the desired result follows  from the  conclusion in Lemma \ref{lem1}.

 \end{proof}

 \section{Error estimates for the fully discrete solution}\label{error-es}


  In this section, we establish the error estimates for the solution of \eqref{sav}.
  By the error decomposition, we have
\[
   w^n-w_h^n=w^n-W^n+W^n-w_h^n=w^n-W^n+e_w^n,\ \ w=u,v,r.
\]
  In light of the conclusion in Theorem \ref{theo:11}, we only need to estimate the term $e_w, w=u,v,r$.
  To this end, we first
 define the Ritz projection operator
$R_h: H_0^1(\Omega)\rightarrow V_h$ by
\begin{eqnarray*}
\left(\nabla(v-R_h v),\nabla\omega \right)=0,  \quad \forall \omega\in V_h.
\end{eqnarray*}
  Then $e^n_u$ (similar for $e_v^n$) can be decomposed into
\[
   e_u=U^n-u^n_h=\xi^n_u+\eta^n_u,\ \ \xi^n_u=R_hU^n-u^n_h,\ \ \eta_u=U^n-R_hU^n.
\]
   According to the standard FEM theory \cite{Th}, it holds that
\begin{eqnarray}\label{Rh}
\|v-R_h v\|_{L^2}+h\|\nabla (v-R_h v)\|_{L^2}\leq C h^{s}\|v\|_{H^s}, \quad \forall v\in H^s(\Omega)\cap H_0^1(\Omega)
\end{eqnarray}
for $1\leq s\leq k+1$.

 Note that the exact solutions of \eqref{timesystem} satisfy
  \begin{eqnarray}\label{sav1}
 \begin{split}
    & \big(D_{\tau}U^{n+1}, w_h\big)=\big(\hat V^{n+\frac 12}, w_h\big), &&\\
    &\big(D_{\tau}V^{n+1}, \zeta_h\big)={-}\big( \nabla \hat U^{n+\frac 12}, \nabla \zeta_h\big){-}\lambda(\hat U^{n+\frac 12},\zeta_h)
    -\big( {\hat R^{n+\frac 12}}H( \tilde U^{n+\frac 12}), \zeta_h \big), &&\\
    &  {R^{n+1}-R^{n}} =\frac{1}{2}\int_{\Omega} H(\tilde U^{n+\frac 12}) ({U^{n+1}-U^n})  d{\bf x},  &&
 \end{split}
\end{eqnarray}
  Subtracting \eqref{sav} from  \eqref{sav1} gives the following error equation
   \begin{eqnarray}\label{sav2}
 \begin{split}
    & \big(D_{\tau} \xi_u^{n+1}, w_h\big)=\big(\hat \xi_v^{n+\frac 12}, w_h\big)+R_1(w_h),&&\\
    &\big(D_{\tau} \xi_v^{n+1}, \zeta_h\big)=-\big( \nabla \hat \xi_u^{n+\frac 12}, \nabla \zeta_h\big){-}\lambda \big(  \hat \xi_u^{n+\frac 12}, \zeta_h\big)
    -\big( {\hat e_r^{n+\frac 12}}H(\tilde u_h^{n+\frac 12}), \zeta_h \big)+R_2(\zeta_h)+I_2(\zeta_h), &&\\
    &  {e_r^{n+1}- e_r^{n}} =\frac{1}{2}\int_{\Omega} H( \tilde u_h^{n+\frac 12}) ({\xi_u^{n+1}-\xi_u^n})  d{\bf x}+R_3+I_3,  &&
 \end{split}
\end{eqnarray}
  where $H(u)= \frac{f(u)}{\sqrt {E( u) }}$, and
 \begin{eqnarray}\label{eq:21}
 \begin{split}
     &&R_1(w_h)= \big(\hat \eta_v^{n+\frac 12}, w_h\big)-\big(D_{\tau} \eta_u^{n+1}, w_h\big),\\
     &&R_2(\zeta_h)= -\big( \nabla \hat \eta_u^{n+\frac 12}, \nabla \zeta_h\big){-}\lambda \big(  \hat \eta_u^{n+\frac 12}, \zeta_h\big)- \big(D_{\tau}\eta_v^{n+1}, \zeta_h\big),\\
     && I_2(\zeta_h)=\big( {\hat R^{n+\frac 12}}(H(\tilde u_h^{n+\frac 12})-H( \tilde U^{n+\frac 12})) , \zeta_h \big),\\
     && R_3= \frac{1}{2}\int_{\Omega} H( \tilde u_h^{n+\frac 12}) ({\eta_u^{n+1}-\eta_u^n})  d{\bf x},\\
     && I_3=\frac{1}{2}\int_{\Omega} \big(H(\tilde U^{n+\frac 12})- H( \tilde u_h^{n+\frac 12})\big) ({{U}^{n+1}-U^n})  d{\bf x}.
 \end{split}
 \end{eqnarray}

\begin{theorem}\label{theorem:1}
   Suppose $u$ is the solution of \eqref{con_laws} satisfying
 \[
      \|u_0\|_{H^{k+1}}+\|u\|_{L^{\infty}((0,T),H^{k+1})}+\|u_t\|_{L^{2}((0,T),H^{k+1})}+\|u_{tt}\|_{L^{2}((0,T),H^{k+1})}\lesssim 1,
 \]
    and $(u_h^n,v_h^n,r_h^n)$ is the solution of \eqref{sav} with
  { $(u_h^0,v_h^0,r_h^0)=(R_h u_0,R_h u_1,\sqrt{E(u_0)})$.}  Then
 \begin{eqnarray}\label{opt:1}
        \|R_hv^n-v_h^{n}\|+ \|R_h u^n-  u_h^{n}\|_{H^1} + |r^n-r_h^n|\lesssim h^{k+1}+\tau^2.
 \end{eqnarray}
\end{theorem}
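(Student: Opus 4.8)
The plan is to build on the error splitting already set up, $w^n-w_h^n=(w^n-W^n)+(W^n-w_h^n)$, where $W^n=(U^n,V^n,R^n)$ solves the time-discrete system \eqref{timesystem}. By Theorem \ref{theo:11} the first part satisfies $\|u^n-U^n\|_{H^1}+\|v^n-V^n\|+|r^n-R^n|\lesssim\tau^2$, and since $R_h$ is bounded on $H^1$ this also gives $\|R_h(u^n-U^n)\|_{H^1}\lesssim\tau^2$; this is the \emph{sole} source of the $\tau^2$ in \eqref{opt:1}. Writing $e_u^n=\xi_u^n+\eta_u^n$ with $\eta_u^n=U^n-R_hU^n$ (and likewise for $v$), the projection parts are handled by \eqref{Rh} together with the $H^{k+1}$-regularity of $U^n,V^n$ inherited from the assumed regularity of $u,u_t$, so that $\|\eta_u^n\|_{L^2}+h\|\nabla\eta_u^n\|\lesssim h^{k+1}$. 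It then remains to prove $\|\xi_v^n\|+\|\nabla\xi_u^n\|+|e_r^n|\lesssim h^{k+1}$ from the error system \eqref{sav2}, where the chosen initial data force $\xi_u^0=\xi_v^0=e_r^0=0$.

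I would next run the discrete energy argument on \eqref{sav2}, mirroring the proof of Theorem \ref{theo:11}. Testing the first equation with $w_h=\xi_v^{n+1}-\xi_v^n$, the second with $\zeta_h=\xi_u^{n+1}-\xi_u^n$, multiplying the third by $e_r^{n+1}+e_r^n$ and adding, the two identical $\tau^{-1}(\xi_u^{n+1}-\xi_u^n,\xi_v^{n+1}-\xi_v^n)$ terms are eliminated and, crucially, the SAV coupling forces the exact cancellation of $-(\hat e_r^{n+1/2}H(\tilde u_h^{n+1/2}),\xi_u^{n+1}-\xi_u^n)$ against $+\hat e_r^{n+1/2}(H(\tilde u_h^{n+1/2}),\xi_u^{n+1}-\xi_u^n)$. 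This yields the telescoping identity
\[
\tfrac12\big(\|\xi_v^{n+1}\|^2-\|\xi_v^n\|^2+\|\nabla\xi_u^{n+1}\|^2-\|\nabla\xi_u^n\|^2+\lambda\|\xi_u^{n+1}\|^2-\lambda\|\xi_u^n\|^2\big)+(e_r^{n+1})^2-(e_r^n)^2=\mathcal R^n,
\]
with $\mathcal R^n=R_2(D\xi_u^{n+1})-R_1(D\xi_v^{n+1})+I_2(D\xi_u^{n+1})+2\hat e_r^{n+1/2}(R_3+I_3)$, the $R_i,I_i$ being those of \eqref{eq:21}.

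The hard part is controlling the non-Lipschitz nonlinearity inside $I_2,I_3$ with no restriction linking $\tau$ and $h$. I would first establish $\|u_h^n\|_{L^\infty}\lesssim1$ by induction on $n$: assuming $\|\xi_u^j\|_{H^1}\lesssim h^{k+1}$ for $j\le n$, the inverse inequality gives $\|\xi_u^j\|_{L^\infty}\lesssim h^{-d/2}\|\xi_u^j\|_{L^2}\lesssim h^{k+1-d/2}\to0$ for $k\ge1$, $d\le3$, and combined with $\|R_hU^j\|_{L^\infty}\le C$ (a consequence of \eqref{H2uh}, \eqref{inftyuh} and \eqref{Rh}) this yields $\|u_h^j\|_{L^\infty}\le C$. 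The decisive point is that $\xi_u$ is a \emph{purely spatial} error, so its bound carries no power of $\tau$ and the induction closes with no CFL condition. With $\|u_h^n\|_{L^\infty}$ and $\|U^n\|_{L^\infty}$ bounded and $f\in C^2$, the map $H=f/\sqrt{E}$ has bounded first derivative on the relevant range, whence $\|H(\tilde u_h^{n+1/2})-H(\tilde U^{n+1/2})\|\lesssim\|\tilde\xi_u^{n+1/2}\|+\|\tilde\eta_u^{n+1/2}\|\lesssim\|\tilde\xi_u^{n+1/2}\|+h^{k+1}$, which controls $I_2$ and $I_3$.

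Finally I would assemble the estimate of $\mathcal R^n$ and invoke Lemma \ref{lem1}. For the projection terms, the defining orthogonality of $R_h$ kills $(\nabla\hat\eta_u^{n+1/2},\nabla\zeta_h)$ in $R_2$, and summation by parts in $n$ (legitimate since $\xi^0=0$) transfers the time-differences onto the $\eta$-factors, so that \eqref{Rh} and the regularity of $U^n,V^n$ give contributions of size $h^{k+1}\|\xi\|+h^{2(k+1)}$. For the nonlinear terms I would use $\|D\xi_u^{n+1}\|\lesssim\tau(\|\hat\xi_v^{n+1/2}\|+h^{k+1})$, obtained by testing the first equation of \eqref{sav2} with $D\xi_u^{n+1}$, so that each nonlinear contribution is $\tau$ times a quadratic energy expression plus $O(\tau h^{2(k+1)})$. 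Summing over $n$ from $0$ to $m$, using $\xi^0=e_r^0=0$, and applying the discrete Gronwall inequality of Lemma \ref{lem1} gives $\|\xi_v^{m+1}\|^2+\|\nabla\xi_u^{m+1}\|^2+\lambda\|\xi_u^{m+1}\|^2+(e_r^{m+1})^2\lesssim h^{2(k+1)}$, closing the induction. Combining this with the $\tau^2$ time-discrete bound and \eqref{Rh} yields \eqref{opt:1}. The main obstacle throughout is the a priori $L^\infty$ control of $u_h^n$; everything else is a lengthy but routine energy-plus-Gronwall computation.
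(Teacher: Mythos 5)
Your proposal is correct and takes essentially the same approach as the paper: the same two-level splitting through the time-discrete system \eqref{timesystem} and the Ritz projection, the same energy identity on \eqref{sav2} with the SAV cancellation of the $\hat e_r$-terms, the same auxiliary bound $\|D\xi_u^{n+1}\|\lesssim \tau(\|\hat\xi_v^{n+\frac 12}\|+h^{k+1})$, summation by parts for the $R_1$-sum, discrete Gronwall via Lemma \ref{lem1}, and the inductive inverse-inequality argument for $\|u_h^n\|_{L^\infty}$, which is exactly the paper's hypothesis \eqref{hypo:uh} verified in Lemma \ref{lem2}. The only cosmetic difference is that you invoke the Ritz orthogonality explicitly to annihilate the gradient part of $R_2$, a step the paper uses implicitly in its bound $|R_2(D\xi_u^{n+1})|\lesssim h^{k+1}\|D\xi_u^{n+1}\|$.
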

\begin{proof} We first estimate the terms $\xi_u^n,\xi_v^n$.
 By taking   $(w_h,\zeta_h)=(D\xi_v^{n+1}, D\xi_u^{n+1})$ and multiplying the third equation  of  \eqref{sav2} by $2\hat e_r^{n+\frac 12}={e_r^{n+1}+e_r^n}$, we derive
  \begin{eqnarray*}
    && \frac{1}{2} \big(\|\xi_v^{n+1}\|^2-\|\xi_v^{n}\|^2 + \|\nabla \xi_u^{n+1}\|^2-\|\nabla \xi_u^{n}\|^2+ \lambda \| \xi_u^{n+1}\|^2-\| \lambda \xi_u^{n}\|^2  \big)+(e_r^{n+1})^2-(e_r^{n})^2  \\
    &=&R_2(D\xi_u^{n+1})+I_2(D\xi_u^{n+1})-R_1(D\xi_v^{n+1})+2(R_3+I_3)\hat e_r^{n+\frac 12},
  \end{eqnarray*}
    where $R_i, I_i, i\le 3$ are given in \eqref{eq:21}. Summing up all $n$ from $0$ to $m$ and using the initial
    error $\xi_u^0=\xi_v^0=0$, we get
 \begin{eqnarray}\label{3.4}
 \begin{split}
    & \frac{1}{2}(\|\xi_v^{{m}+1}\|^2 + \|\nabla \xi_u^{m+1}\|^2+ \lambda  \| \xi_u^{m+1}\|^2)+|e_r^{m+1}|^2 & \\
   &=\sum_{n=0}^m \left( R_2(D\xi_u^{n+1})+I_2(D\xi_u^{n+1})-R_1(D\xi_v^{n+1})+2(R_3+I_3)\hat e_r^{n+\frac 12}\right). &
 \end{split}
  \end{eqnarray}

  To estimate the terms in the right hand side of \eqref{3.4},  we shall first make the  hypothesis that there exists a positive constant $C_*$ such that
\begin{equation}\label{hypo:uh}
   \|u_h^n\|_{L^{\infty}}\le C_*.
\end{equation}
  This hypothesis will be verified later by using the method of mathematical induction.

  Due to \eqref{hypo:uh} and the  fact that  $f\in C^2(\mathbb R)$, we have
 \[
    | H(u_h^n)|+ | H'(u_h^n)| +  | f'(u_h^n)|+ | f''(u_h^n)|\lesssim 1,\ \ \forall n.
 \]
  Then
\[
   |R_3|\lesssim \tau h^{k+1},\ \ |R_2(D\xi_u^{n+1})|\lesssim  h^{k+1} \|D\xi_u^{n+1}\|.
\]
By Taylor expansion, there exists a $\theta\in (0,1)$ such that
\begin{eqnarray}\nonumber
    \| H(\tilde u_h^{n+\frac 12})-H(\tilde U^{n+\frac 12})\| &=& \| H'(\theta
    \tilde u_h^{n+\frac 12} + (1-\theta) \tilde U^{n+\frac 12})(\tilde u_h^{n+\frac 12}-\tilde U^{n+\frac 12})\| \\ \label{R2}
     &\lesssim &  h^{k+1}+\|\tilde \xi_u^{n+\frac 12}\|.
\end{eqnarray}
  Then
\begin{eqnarray*}
   |I_2(D\xi_u^{n+1})|\lesssim (h^{k+1}+\|\tilde \xi_u^{n+\frac 12}\|)\|D\xi_u^{n+1}\|,\ \
    |I_3|\lesssim \tau (h^{k+1}+\|\tilde \xi_u^{n+\frac 12}\|),
  \end{eqnarray*}
  and thus
\begin{equation}\label{R3}
   \big| \sum_{n=0}^m 2(R_3+I_3)\hat e_r^{n+\frac 12}\big|\le C h^{2(k+1)}+\tau\sum_{n=1}^m \big(\|\tilde \xi_u^{n+\frac 12}\|^2+ |\hat e_r^{n+\frac 12}|^2\big).
\end{equation}
  On the other hand,
  we choose $w_h=D\xi_u^{n+1}$ in \eqref{sav2} to get
\[
   \|D\xi_u^{n+1}\|\lesssim \tau  (\| \hat \xi_v^{n+\frac 12}\|+ h^{k+1}),
\]
   which yields, together with Cauchy-Schwarz inequality
\[
   |I_2(D\xi_u^{n+1})|+|R_2(D\xi_u^{n+1})|\lesssim \tau h^{2(k+1)}+\tau(\| \hat \xi_v^{n+\frac 12}\|^2+\|\tilde \xi_u^{n+\frac 12}\|^2).
\]
 Consequently,
\begin{equation}\label{eq:1}
   \sum_{n=0}^m|I_2(D\xi_u^{n+1})|+|R_2(D\xi_u^{n+1})|\lesssim  h^{2(k+1)}+\tau\sum_{n=0}^m (\| \hat \xi_v^{n+\frac 12}\|^2+\|\tilde \xi_u^{n+\frac 12}\|^2).
\end{equation}
As for the term  $\sum_{n=0}^mR_1(D\xi_v^{n+1})$ in \eqref{3.4}, we recall the definition of $R_1$ in \eqref{eq:21} to get
\begin{eqnarray*}
   \big| \sum_{n=0}^mR_1(D\xi_v^{n+1})\big|&=&\big| \big(\hat \eta_v^{m+\frac 12}-D_{\tau} \eta_u^{m+1}, \xi_v^{m+1}\big)
   +\sum_{n=1}^m\big(\hat \eta_v^{n-\frac 12}-\hat \eta_v^{n+\frac 12}+D_{\tau} \eta_u^{n+1}-D_{\tau} \eta_u^{n}, \xi_v^{n}\big) \big|\\
    &\le & C h^{k+1}\|\xi_v^{m+1}\|+C\tau h^{k+1}\sum_{n=1}^m\|\xi_v^{n}\|\\
    &\le& C h^{2(k+1)}+ C\tau \sum_{n=1}^m\|\xi_v^{n}\|^2 +\frac{1}{4}\|\xi_v^{m+1}\|^2.
\end{eqnarray*}
   Substituting the above inequality, \eqref{R3}-\eqref{eq:1} into \eqref{3.4},  we get
\[
      \|\xi_v^{m+1}\|^2 + \|\nabla \xi_u^{m+1}\|^2+   \| \xi_u^{m+1}\|^2+|e_r^{m+1}|^2
     \lesssim  h^{2(k+1)}+\tau \sum_{n=0}^{m+1}\big(\|\xi_v^{n}\|^2+|  e_r^{n}|^2+\| \xi_u^{n}\|^2\big).
\]
   By the Gronwall inequality given in Lemma \ref{lem1},
\begin{equation}\label{eq:3}
   \|\xi_v^n\|+ \|\xi_u^n\|_{H^1} + |e_r^n|\lesssim h^{k+1},\ \ \forall n\ge 1.
\end{equation}
  Then from the triangle inequality and the conclusion in Theorem \ref{theo:11},
\begin{eqnarray*}
  && \|R_hv^n-v_h^{n}\|+ \|R_h u^n-u_h^{n}\|_{H^1} + |r^n-r_h^n|\\
  && \le \|\xi_v^n\|+ \|\xi_u^n\|_{H^1} + |e_r^n|+ \|R_h (v^n-V^n)\|
   +\|R_h (u^n-U^n)\|+|r^n-R^n|\lesssim
    h^{k+1}+\tau^2.
\end{eqnarray*}
   The proof is complete.
\end{proof}

\begin{remark} { Note that the optimal convergence rate for the $H^1$ error approximation is $O(h^k)$.}
  The error estimate in \eqref{opt:1} indicates that
   the Galerkin SAV solution $u_h^n$ is { superclose} to the Ritz projection of the exact solution $R_hu^n$ under the $H^1$ norm, {which  is one order higher than the counterpart
   optimal converge rate.}
   {As} a direct consequence of \eqref{opt:1}, we have the following  optimal error estimates:
\begin{eqnarray*}
   && \|v^n-v_h^{n}\|_{H^1}+ \| u^n-  u_h^{n}\|_{H^1} \lesssim h^{k}+\tau^2,\\
    && \|v^n-v_h^{n}\|+ \| u^n-  u_h^{n}\|+|r^n-r_h^n| \lesssim h^{k+1}+\tau^2.
\end{eqnarray*}
\end{remark}

  To end with this section, we prove the inequality \eqref{hypo:uh}.

\begin{lemma}\label{lem2}
   Under  the conditions of Theorem \ref{theorem:1},  it holds that
\[
    \|u_h^n\|_{L^{\infty}}\le C_*,\ \ \forall n\ge 1,
\]
where the constant $C_*$ is independent of $\tau$ and $h$.
\end{lemma}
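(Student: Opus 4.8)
The plan is to verify the hypothesis \eqref{hypo:uh} by induction on $n$, exploiting the fact that the purely spatial error $\xi_u^n=R_hU^n-u_h^n$ carries \emph{no} time-step dependence. The starting point is the splitting
\[
\|u_h^n\|_{L^\infty}\le \|\xi_u^n\|_{L^\infty}+\|R_hU^n\|_{L^\infty},
\]
where the second term will be shown to be bounded by a constant $M$ independent of $\tau$, $h$, and the induction constant $C_*$. Indeed, \eqref{H2uh} (together with $\|\nabla\triangle U^n\|\lesssim 1$, so that $U^n\in H^3$ uniformly in $n,\tau$) and \eqref{inftyuh} give $\|U^n\|_{L^\infty}\lesssim 1$; writing $R_hU^n={\cal I}_hU^n+(R_hU^n-{\cal I}_hU^n)$ and combining the $L^\infty$-stability of the interpolant with an inverse inequality,
\[
\|R_hU^n-{\cal I}_hU^n\|_{L^\infty}\le Ch^{-d/2}\|R_hU^n-{\cal I}_hU^n\|_{L^2}\le Ch^{2-d/2}\longrightarrow 0,
\]
one obtains $\|R_hU^n\|_{L^\infty}\le M$ uniformly. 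We then fix the induction constant as $C_*=M+1$ (enlarged, if needed, to dominate the base case $\|u_h^0\|_{L^\infty}=\|R_hu_0\|_{L^\infty}$, which is bounded since $u_0$ is smooth).

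For the inductive step, suppose $\|u_h^j\|_{L^\infty}\le C_*$ for all $j\le n$. The decisive structural point is that the extrapolated argument $\tilde u_h^{n+\frac12}=\tfrac12(3u_h^n-u_h^{n-1})$ entering the nonlinear terms of \eqref{sav} involves only the levels $j\le n$; hence under the induction hypothesis the quantities $|H(\tilde u_h^{n+\frac12})|,\,|H'(\cdot)|,\,|f'(\cdot)|,\,|f''(\cdot)|$ are bounded, which is precisely what \eqref{hypo:uh} supplies in the proof of Theorem \ref{theorem:1}. Running that argument up to level $n+1$ therefore yields
\[
\|\xi_u^{n+1}\|_{H^1}\le C_0\,h^{k+1},
\]
where the right-hand side contains no factor of $\tau$, because $U^n$ and $u_h^n$ share the same temporal discretization and the truncation errors cancel in the error equation \eqref{sav2}. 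The constant $C_0$ depends on $C_*$ through the bounds on $H,H',f',f''$, but $C_*$ is now a fixed number.

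Applying the inverse inequality to $\xi_u^{n+1}\in V_h$ and invoking \eqref{eq:3},
\[
\|\xi_u^{n+1}\|_{L^\infty}\le Ch^{-d/2}\|\xi_u^{n+1}\|_{L^2}\le C\,C_0\,h^{k+1-d/2},
\]
and since $k\ge 1$, $d\le 3$ give $k+1-d/2\ge\tfrac12>0$, this tends to $0$ as $h\to 0$. Thus there is an $h_0>0$, depending only on the fixed constants $C_0=C_0(C_*)$ and $C$, such that $h\le h_0$ forces $\|\xi_u^{n+1}\|_{L^\infty}\le 1$, whence $\|u_h^{n+1}\|_{L^\infty}\le 1+M=C_*$, closing the induction.

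The main obstacle is the apparent circularity: the estimate of Theorem \ref{theorem:1} holds only under \eqref{hypo:uh}, yet \eqref{hypo:uh} is exactly what we are proving. This will be resolved by two facts acting together. First, because the scheme is linearly implicit, the nonlinearity at step $n+1$ is evaluated through an extrapolation of the already-controlled levels $\le n$, so no self-referential bound at level $n+1$ is required and the induction is well-posed within each step. Second, because the spatial error is $O(h^{k+1})$ with a constant that is \emph{frozen} once $C_*$ is fixed, the inverse-estimate loss $h^{-d/2}$ is overcompensated for $k\ge 1$, $d\le 3$; this lets us close the bound for all sufficiently small $h$ with no constraint coupling $\tau$ to $h$, which is the source of the unconditional character of the result.
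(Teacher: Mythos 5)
Your proposal is correct and follows essentially the same route as the paper: induction on $n$, the splitting of $u_h^n$ through $R_hU^n$ and the interpolant ${\cal I}_hU^n$, the uniform $H^2$/$H^3$ bounds \eqref{H2uh}--\eqref{inftyuh} on the time-discrete solution, the superclose estimate $\|\xi_u^n\|_{H^1}\lesssim h^{k+1}$ from \eqref{eq:3}, and an inverse inequality absorbed by taking $h\le h_0$ with no $\tau$--$h$ coupling. Your explicit resolution of the apparent circularity (the extrapolated argument $\tilde u_h^{n+\frac12}$ involves only levels $\le n$, and the constant is frozen once $C_*$ is fixed) is a welcome clarification of what the paper leaves implicit, but it is the same argument, not a different one.
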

\begin{proof}
   We will show the above inequality by induction.
   To this end, we first denote by ${\cal I}_hU\in V_h$ the interpolation function of $U$.
   By the approximation theory, we have
\[
    \|{\cal I}_hU-U\|+\|{\cal I}_hU-R_hU\|\lesssim h^2 \|U\|_{H^2},\ \ \|{\cal I}_hU\|_{L^{\infty}}\lesssim \|U\|_{L^{\infty}}.
\]
   Note that
 \[
    \|u_h^0\|_{L^{\infty}}= \|R_hu^0\|_{L^{\infty}}\le C.
 \]
    By \eqref{eq:3} and the inverse inequality $\|v_h\|_{L^{\infty}}\lesssim h^{-\frac  d2}\|v_h\|$ for all $v_h\in V_h$, we get
\begin{eqnarray*}
   \|u_h^1\|_{L^{\infty}}&\le & \|u_h^1-R_hU^1\|_{L^{\infty}}+\|R_hU^1-{\cal I}_hU^1\|_{L^{\infty}}+\|{\cal I}_hU^1\|_{L^{\infty}}\\
   &\le &
   C ( h^{2-\frac{d}{2}} +\|U^1\|_{L^{\infty}})
  \le  C_1 ( h^{2-\frac{d}{2}} +\|U^1\|_{H^2}).
\end{eqnarray*}
   Now we choose a positive constant $h_1$ which is small enough to satisfy
\[
   C_1h_1^{\frac 12}\le C.
\]
  Then for $h\in (0,h_1]$, we derive that
\[
   \|u_h^1\|_{L^{\infty}}\le C+\|U^1\|_{H^2}\le C_3.
\]
   Therefore, we can choose the positive constant $C_*$  independent of $h$ and $\tau$
  such that
\[
    C_*\ge \max\{2\|U^n\|_{H^2},\|u_h^1\|_{L^{\infty}}\}.
\]
Then \eqref{hypo:uh} is valid for $n=1$.
Next, suppose \eqref{hypo:uh} holds for all $l\le n-1$. We will show that it is also valid for $n$.
 Thanks to \eqref{eq:3}, we have
\[
    \|u_h^n-R_hU^n\|_{H^1}\le C h^{k+1}.
\]
  Then
\begin{eqnarray*}
   \|u_h^n\|_{L^{\infty}}&\le & \|u_h^n-R_hU^n\|_{L^{\infty}}+\|R_hU^n-{\cal I}_hU^n\|_{L^{\infty}}+\|{\cal I}_hU^n\|_{L^{\infty}}\\
  &\le& C h^{-\frac{d}{2}}(\|u_h^n-R_hU^n\|+\|R_hU^n-{\cal I}_hU^n\| )+\|U^n\|_{L^{\infty}}
  \le  C_1 h^{\frac 12}+\frac{C_*}{2}.
\end{eqnarray*}
  Let  $h_1$ be small enough to satisfy
\[
   C_1 h_1^{\frac 12} \le \frac{C_*}{2}.
\]
  Then for $h\in (0,h_1]$, we derive that
\[
   \|u_h^n\|_{L^{\infty}}\le  C_1 h^{\frac 12}+\frac{C_*}{2}\le C_*.
\]
 This completes the induction.
\end{proof}


\section{Numerical simulations}\label{num}
We present several numerical results to confirm  our theoretical findings in this section.

{\bf Example 1}  Consider the following Klein-Gordon equation
\begin{eqnarray}\label{5.1}
u_{tt}=  u_{xx}+u_{yy}+ u-u^3+g_1(x,y,t). ~~~~~~(x,y,t)\in [0,1]^2 \times [0,T],
\end{eqnarray}
where $u(x,y,0)$, $u_t(x,y,0)$ and $g_1(x,y,t)$ { are given by} the exact solution
\begin{align}
  u(x,t) = \exp(-t)x^2(1-x)^2y^2(1-y)^2.
\end{align}
 We test  convergence orders of the fully discrete scheme using uniform triangulation with $M+1$ nodes in each spatial direction, and take $N=M$  and  $N=M^{\frac{3}{2}}$ for the linear finite element method (L-FEM) and quadratic finite element method (Q-FEM), respectively. We list the errors at time $T=1$ as well as the convergence rates in Table \ref{table1}.
Here and below, we denote
\[ \|e\|_0= \|u^N-u_h^N\|,~~~\|e\|_1=\| R_hu^N-u_h^N\|_{H^1}. \]
These results indicate that the fully discrete scheme is convergent and has the order $\mathcal{O}(\tau^2 + h^{r+1})$.
We also test the unconditional convergence of the fully discrete scheme with different spatial step-sizes for every fixed $\tau$. The $l_2$-errors at time $T = 1$ are shown
in Figure \ref{fig10}. When the temporal stepsize is fixed, the $L_2$ errors tend to a constant. They imply that the error estimates hold without any temporal mesh sizes restrictions dependent on the spatial mesh sizes.

 \begin{figure}[!ht]
 \includegraphics[scale=.48]{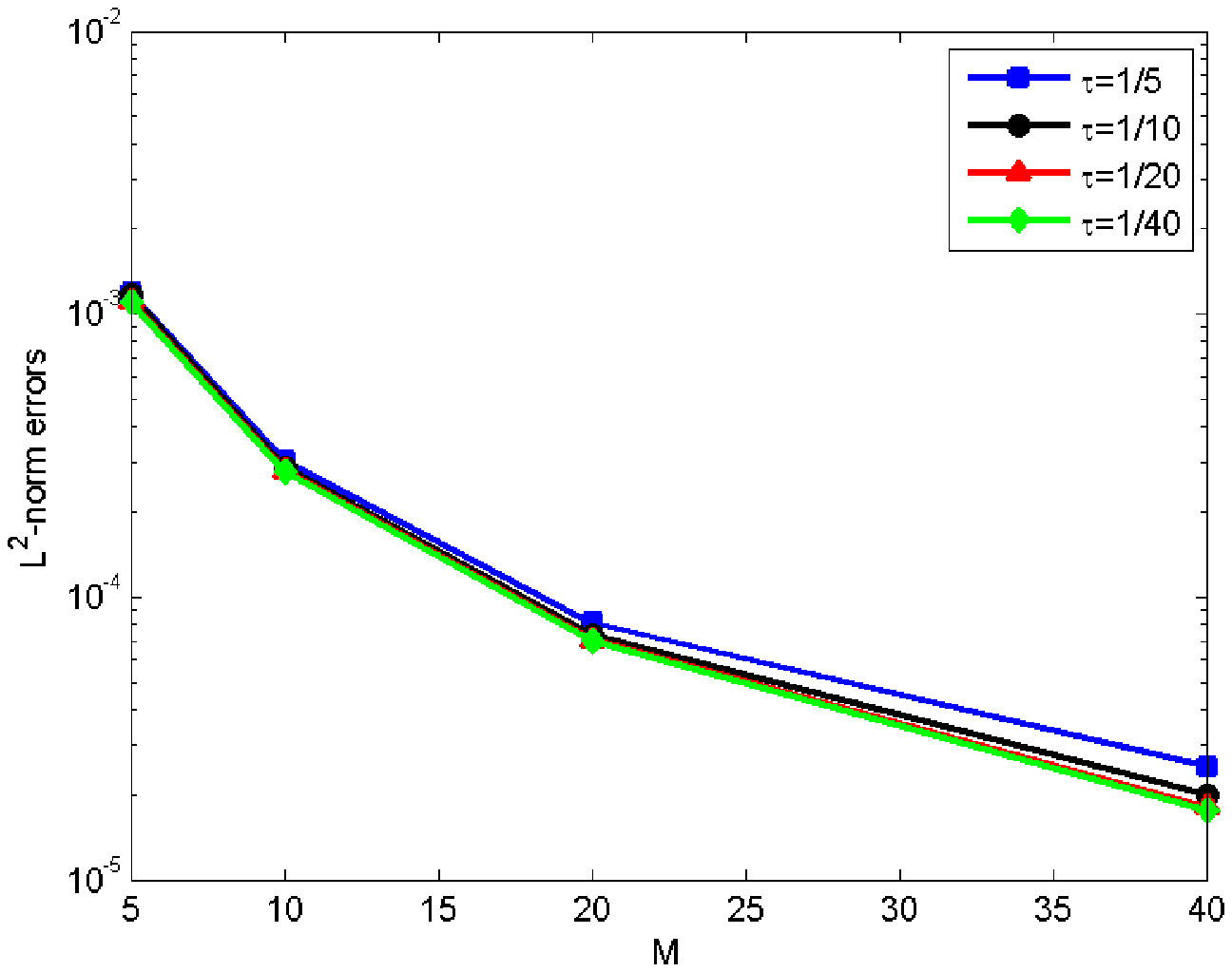}
  \includegraphics[scale=.48]{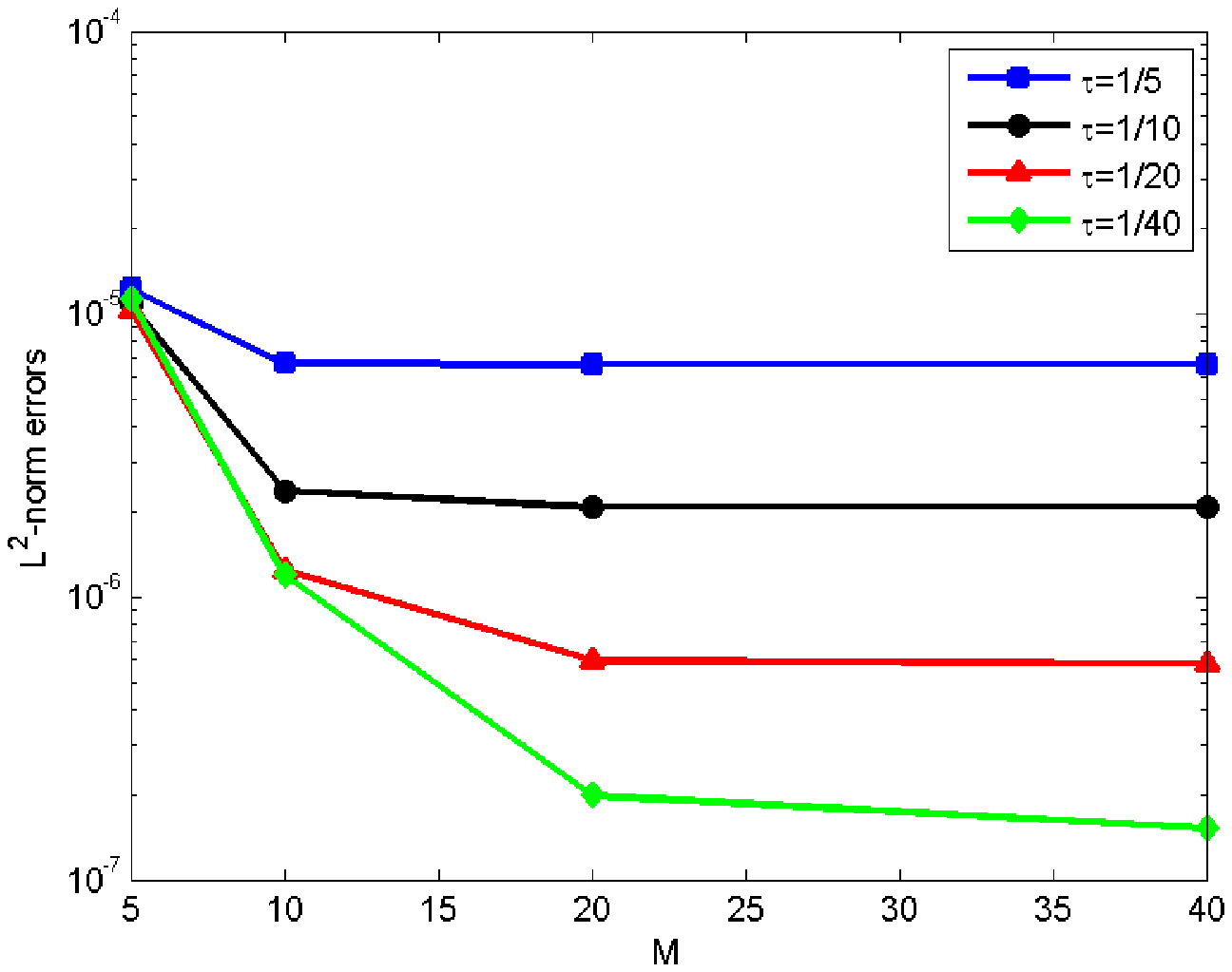}
 \caption{$L^2$-errors of linear and quadratic finite element approximation}
  \label{fig10}
 \end{figure}

\begin{table}[!ht]\caption{ Errors and convergent orders for 2D problems.}\label{table1} \centering
        \begin{tabular}{c c c c c| c cccc  }
        \hline
         & &\multicolumn{2}{c}{$\mbox{L-FEM}$}& &{}&\multicolumn{2}{c}{$\mbox{Q-FEM}$}&{}\\
\cline{2-5}\cline{6-9}
     M     &  $\|e\|_0$ & order &$\|e\|_1$ & order& $\|e\|_0$&   order &$\|e\|_1$ &order \\
        \hline\cline{1-9}
    8 &  4.54E-4 &    --  &  5.56e-4  &      -- &  2.47E-6  &    --  &  4.60E-5 &     --\\
   16 &  1.11E-4 &  2.03  &  1.61E-4  &    1.83 &  2.71E-7  &   3.19 &  6.85E-6&   2.74\\
   24 &  4.90e-5 &  2.02  &  7.40E-5  &    1.96 &  7.66E-8  &   3.12 &  2.41E-6 &  2.57\\
   32 &  2.74e-5 &  2.02  &  4.24E-5  &    1.96 &  3.18E-8  &   3.05 &  9.81E-7 &  3.10\\
   40 &  1.75E-5 &  2.01  &  2.73E-5  &    1.97 &  1.62E-8  &   3.02 &  4.88E-7 &  3.12\\
\hline
   \end{tabular}
\end{table}

\begin{figure}[!t]
\centering
\includegraphics[width=0.58\textwidth]{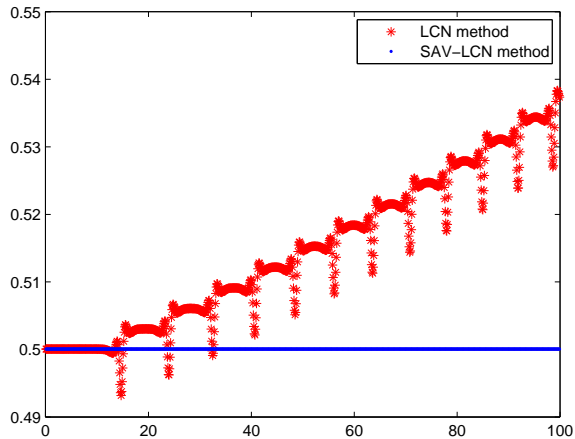}
\caption{The evolutions of the discrete energies}\label{fig11}
\end{figure}

Then, we set $g_1(x,y,t)=0$, $T=100$, $N=10$, $M=10$, and solve the problem using L-FEM.
The time discretization is achieved by the linearized Crank-Nicolson (LCN)  method, and by
the proposed SAV LCN method, respectively. The evolutions of the discrete energies are shown in Figure \ref{fig11}.  Clearly, the energies obtained by  the
LCN finite element method increase as time goes on, while the one
obtained by our method remains the same. It implies that
numerical solutions by the SAV approach conserve the energy.

\vskip.1in

{\bf Example 2}  Consider the following Sine-Gordon equation
\begin{eqnarray}\label{5.2}
u_{tt}=  u_{xx}+u_{yy}+u_{zz} +\sin(u)+g_2(x,y,z,t),~~~~~~(x,y,z,t)\in [0,1]^3 \times [0,1],
\end{eqnarray}
where the initial conditions and $g_2(x,y,z,t)$ { are produced} from the exact solution
\begin{align}
  u(x,t) = (1+t^3)\sin(2\pi x)\sin(2\pi y)\sin(2\pi z).
\end{align}

\begin{table}[!ht]\caption{ Errors and convergent orders for 3D problems.}\label{table2} \centering
        \begin{tabular}{c c c c c| c c cccc  }
        \hline
         & &\multicolumn{2}{c}{$\mbox{L-FEM}$}& & &{}&\multicolumn{2}{c}{$\mbox{Q-FEM}$}&{}\\
\cline{2-5}\cline{7-10}
     M     &  $\|e\|_0$ & order &$\|e\|_1$ & order& M & $\|e\|_0$&   order &$\|e\|_1$ &order \\
        \hline\cline{1-9}
   16 &  6.36E-2 &    --  &  4.43E-1  &      -- & 10  & 5.69E-3 &     --  &  1.29E-1 &     --\\
   20 &  4.17E-2 &  1.89  &  2.94E-1  &    1.83 & 12  & 3.24E-3  &   3.09 &  7.41E-2 &   2.96\\
   24 &  2.94E-2 &  1.92  &  2.08E-1  &    1.91 & 14  & 1.99E-3  &   3.16 &  4.91E-2 &   2.67\\
   28 &  2.18E-2 &  1.94  &  1.55E-1  &    1.90 & 16  & 1.31E-3  &   3.13 &  3.40E-2  &   2.75\\
\hline
   \end{tabular}
\end{table}


 \begin{figure}[!t]
\centering
\includegraphics[width=0.58\textwidth]{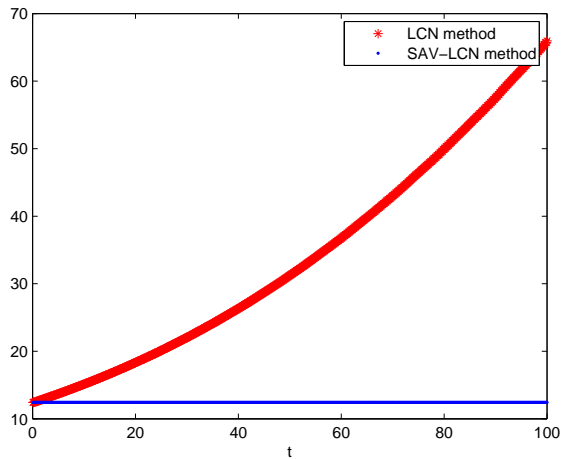}
\caption{The evolutions of the discrete energies}\label{fig21}
\end{figure}

We still take $N=M$  and  $N=M^{\frac{3}{2}}$ for the linear and quadratic finite element approximations, respectively. The numerical errors at time $T=1$ as well as the convergence rates are presented in Table \ref{table2}.
The given results indicate that the fully discrete scheme has the order $\mathcal{O}(\tau^2 + h^{r+1})$.


Next, we set $g_2(x,y,z,t)=0$, $T=20$, $N=10$, $M=10$ and solve the problem  by the linear finite element method.
The evolutions of the discrete energies for the 3D problems are displayed in Fig. \ref{fig21}.  Clearly, the discrete energies by the SAV approach remain unchanged, while the ones obtained by the LCN finite element method
 increase as time goes on. They further confirm the findings in this study.

\section{Conclusion}\label{conc}
In this study, we present a linearly implicit numerical schemes for solving the nonlinear wave equation \eqref{con_laws}.
 The scheme is developed by combining the SAV approach with finite element methods,
classical Crank-Nicolson methods, and extrapolation approximation. The fully discrete scheme is proved to be unconditionally convergent and energy-conserving.
Numerical illustrations are presented to confirm the theoretical findings.

\end{document}